 \newcommand{\diag}{\operatorname{diag}}
  \newcommand{\rank}{\operatorname{rank}}
 \newcommand{\inv}{\operatorname{inv}}
 \DeclareMathOperator*{\argmin}{arg\,min}
\newcommand{\spanop}{\operatorname{span}}
 \newtheorem{Theorem}{Theorem}
 \newtheorem{Lemma}{Lemma}
 \newtheorem{Corollary}{Corollary}
  \newtheorem{Definition}{Definition}
 \newtheorem{Problem}{Problem}
 \newtheorem{Proposition}[Theorem]{Proposition}
 \newtheorem{Remark} {Remark}
\newcommand {\R}{\mathbb R}
\newcommand{\be}{\begin{equation}}
\newcommand{\ee}{\end{equation}}
\newcommand{\ben}{\begin{equation*}}
\newcommand{\een}{\end{equation*}}
\begin{document}

\title{On IDA-PBC with Maximum Energy Shapeability
\thanks{
The work of Z. Jiao and C. Wu was supported by the National Natural Science Foundation of China under Grant 62403370. The work of B. Fan was supported by the National Natural Science Foundation of China under Grant 62503378. (\emph{$^*$Corresponding author: Chengshuai Wu.})
}}
\author{Ziheng Jiao, Chengshuai Wu$^*$, Bo Fan, Meng Zhang, and Romeo Ortega 
\thanks{
\IEEEcompsocthanksitem
	Z. Jiao, C. Wu, and B. Fan  are with the School of Automation Science and Engineering, Xi'an Jiaotong University, Xi'an 710049, China (e-mails: chengshuai.wu@xjtu.edu.cn, bofan@xjtu.edu.cn). 
\IEEEcompsocthanksitem
	M. Zhang  is with the School of Cyber Science and Engineering, Xi'an Jiaotong University, Xi'an 710049, China (e-mail: mengzhang2009@xjtu.edu.cn).
\IEEEcompsocthanksitem
   R. Ortega is with the Department of Electrical Engineering, Instituto Tecnol\'ogico Aut\'onomo de M\'exico (ITAM), Mexico city 01080, Mexico.	(e-mail: romeo.ortega@itam.mx). 
	}}
	
	\maketitle

\begin{abstract}
Interconnection and Damping Assignment Passivity-Based Control (IDA-PBC) is a well-established stabilization technique for affine nonlinear systems. However, its application is generally hindered by the requirement of solving a set of partial differential equations (PDEs), i.e., the so-called matching equation. 
This paper introduces the notion of \emph{maximum energy shapeability} which describes the scenario that the homogeneous part of the matching equation admits $m$ independent solutions with $m$ the dimension of the control input. We demonstrate that the maximum energy shapeability enables a systematic procedure for the IDA-PBC design by transforming the matching equation into a set of easier-to-solve PDEs. Sufficient conditions for maximum energy shapeability are also provided. It is shown that some existing constructive IDA-PBC designs actually implicitly exploit the maximum energy shapeability. The proposed procedure for the IDA-PBC design is illustrated with the magnetic levitation system.
\end{abstract}

\begin{IEEEkeywords}
IDA-PBC, energy shaping, nonlinear control, port-Hamiltonian systems, matching equation
\end{IEEEkeywords}

\section{INTRODUCTION}
Interconnection and Damping Assignment Passivity-Based Control (IDA-PBC) is first introduced in \cite{ortega2001putting} to stabilize a class of physical systems described in the port-Hamiltonian (pH) form~\cite{van2014port}. It is later extended to general affine nonlinear systems and especially underactuated ones  \cite{ortega2002interconnection}. Unlike some classical nonlinear control methods such as feedback linearization, or the backstepping procedure for strict-feedback systems (see e.g., \cite{sepulchre2012constructive}, \cite{hkhalil2002}, \cite{isidori-book}), the IDA-PBC method directly exploits the inherent dynamics of the controlled systems, leading to a control design without relying on nonlinearity cancellation or high gain design. 
The core idea of the IDA-PBC method is the so-called \textit{energy shaping} such that the closed-loop system achieves a pH structure with certain desired Hamiltonian/energy function, interconnection matrix and damping matrix, which are the solutions to a set of partial differential equations (PDEs), namely, the \textit{matching equation}. 
Necessary and sufficient conditions for the solvability of the matching equation have been specified in~\cite{cheng2005feedback}. When the coefficient matrix of the matching equation is constant, an easy-to-verify solvability test is given in~\cite{kotyczka2009parametrization}. These results provide a guideline for the parametrization of the target pH structure. Nevertheless, the matching equation remains the major challenge for the implementation of the IDA-PBC method in practical applications due to the lack of systematic approaches for solving general PDEs.

Some researchers dedicated to provide closed-form solutions of the matching equations for systems with special structures such as mechanical systems. It is shown in \cite{coordinate2007} that the matching equation of mechanical systems, which consists of two sets of PDEs corresponding to the kinetic energy and the potential energy, can be simplified by parameterizing the target dynamics and conducting appropriate coordinate changes. In \cite{acosta2005interconnection}, the authors investigated the mechanical systems with underactuation degree of one. It shows that the kinetic energy PDEs, which are nonlinear and inhomogeneous, can be transformed into algebraic ones with a predefined desired inertia matrix, while the potential energy PDEs can be explicitly solved provided that the original inertia matrix and the potential energy function are independent of the underactuated coordinates. The results in~\cite{acosta2005interconnection} are further extended in~\cite{ryalat2016simplified} where it is assumed that the inertia matrix and the potential energy function depend only on one underactuated/actuated coordinate.

Another line of research on IDA-PBC focuses on circumventing the requirement of solving the matching equation. As firstly proposed in~\cite{fujimoto2001canonical}, it is well known that the matching equation reduces to a set of algebraic equations when the desired energy function is pre-determined.
In \cite{nunna2015constructive}, the IDA-PBC design for pH-systems is modified by introducing extended state variables, and this yields a dynamic feedback law. It obviates the need for solving the PDEs which are replaced by a set of nonlinear algebraic equations. For a class of pH-systems whose interconnection, damping, and input matrices satisfy certain integrability condition for vector fields, i.e., the \emph{Poincar\'e's Lemma}, Ref. \cite{borja2016constructive} shows that an IDA-PBC controller can be designed without directly relying on the solution of the matching equation. The similar integrability condition is also adopted in~\cite{Donaire2016shaping} which provides an energy shaping method without explicitly solving PDEs for a class of mechanical systems described by the Euler-Lagrange equations.

In this paper we seek to transform the matching equation, by exploiting its properties, into a form potentially easier to solve. Similar ideas have been employed in existing works, for example, the matching equation can be further simplified using the \emph{Poincar\'e's Lemma} by factorization of the desired energy function~\cite{cheng2005feedback} or the control law~\cite{wu2020stabilization}. In~\cite{kotyczka2013local}, it is shown that a reduced order matching equation can be obtained by introducing a change of coordinates, which exists provided the homogeneous part of the matching equation admits $m$ independent solutions, with $m$ being the dimension of the control input. Based on~\cite{kotyczka2013local}, this present work formally defines the notion of \emph{maximum energy shapeability} to specify the case where such $m$ solutions exist and the matching equation is solvable. In this case, we show that the resulting reduced order matching equation can be further simplified by exploiting the higher-order \emph{mixed-partial symmetry requirement} for the Hessian of the desired energy function. Then a systematic procedure for the IDA-PBC design is proposed based on the \emph{Mean Value Theorem for vector-valued functions}.
Additionally, we show that some existing works on constructive energy shaping~\cite{acosta2005interconnection, borja2016constructive} actually implicitly exploit the maximum energy shapeability.
Our results are applied to the magnetic levitation system, and a control law induced by IDA-PBC can be obtained in a more straightforward way compared to directly solving the matching equation.

The remainder of this paper is organized as follows. The next section reviews some basic results on the IDA-PBC method and introduces the definition of maximum energy shapeability. The main results are given in Section~\ref{sec:main}. Section~\ref{sec:suff} presents a set of sufficient conditions to ensure maximum energy shapeability. In Section~\ref{sec:simu}, the proposed procedure for the IDA-PBC design is illustrated with the magnetic levitation system. Section~\ref{sec:conclu} concludes the paper.

\section{Preliminaries}
We first briefly review the standard IDA-PBC method (see e.g., \cite{ortega2004nonlinear}) and some properties of the matching equation. Eventually, the notion of maximum energy shapeability is formally defined.

\subsection{IDA-PBC method} \label{sec:preA}
 Consider the affine nonlinear system
\be  \label{eq:sys}
\dot x =  f(x) + G(x) u
\ee
where $x \in \mathbb{R}^n$ is the state, $u \in \mathbb{R}^m$, with $m < n$, is the control input, that is, the system~\eqref{eq:sys} is underactuated. The mappings $f : \mathbb{R}^n \to \mathbb{R}^n$, $G: \mathbb{R}^n \to \mathbb{R}^{n \times m} $ are continuous and sufficiently differentiable. It is assumed that $\rank(G(x)) = m$ for all $x \in \Omega$, where $\Omega \subset \R^n$ denotes a simply connected state domain under concern. Note that all the assignable equilibrium points of~\eqref{eq:sys} are contained in the set 
\be 
\mathcal{E} := \{  x \in \Omega ~|~ G^\perp(x) f(x) = 0 \}
\ee
where $G^\perp: \R^n \to \R^{(n-m) \times n}$ is a full-rank left annihilator of $G(x)$, i.e., $G^\perp (x) G(x) = 0$. 

The standard IDA-PBC approach is to design a state feedback such that the closed-loop system of~\eqref{eq:sys} takes the port-Hamiltonian (pH) form \cite{ortega2002interconnection}
\be \label{eq:pch}
\dot x = (J_d(x) - R_d(x)) \nabla H_d(x)
\ee
where the matrices $J_d, R_d: \R^n \to \R^{n \times n}$ satisfy $J_d^T(x) = -J_d(x)$, $R_d(x) =  R_d(x) \succeq 0$, and are referred to as the interconnection and damping matrices, respectively. For simplicity, we define $F_d(x) :=  J_d(x) -R_d(x)$ throughout the paper. The mapping $H_d: \R^n \to \R$ is the energy function with a (local) minima at a desired equilibrium point $x^* \in \mathcal{E}$, i.e.,
\be  \label{eq:mini}
x^* = \argmin H_d(x).
\ee
In summary, the IDA-PBC design is to solve the next problem.

\begin{Problem} \label{prb:ida}
For a desired equilibrium point $x^* \in \mathcal{E}$ of the system~\eqref{eq:sys},
pick a full rank left annihilator $G^\perp (x)$ of $G(x)$. 	
Find mappings $F_d: \R^n \to  \R^{n \times n}$ and $H_d: \R^n \to \R$ such that for all $x \in \Omega$
	\begin{subequations}
	\begin{align}
		& F_d(x) + F_d^T(x) \preceq 0   \label{eq:JR} \\
		&  G^\perp (x)  F_d(x) \nabla H_d(x) = G^\perp (x) f(x)  \label{eq:mat}  \\
		& \nabla H_d(x^*) = 0, ~\frac{\partial^2 H_d}{\partial x^2} (x^*) \succ 0  \label{eq:H}.
	\end{align}
	\end{subequations}
\end{Problem}

The PDE~\eqref{eq:mat} is the so-called \emph{matching equation} of IDA-PBC. The minimum condition~\eqref{eq:mini} is ensured by~\eqref{eq:H}. 
If Problem~\ref{prb:ida} is solved, then the state feedback
\be \label{eq:u}
		u = G^\dagger(x) (F_d(x) \nabla H_d(x)  - f(x) ) 
\ee 
transforms~\eqref{eq:sys} into~\eqref{eq:pch}, and renders $x^*$ stable. Here, $G^\dagger(x):=(G^T(x) G(x))^{-1} G^T(x) $ is the pseudoinverse of $G(x)$. By \emph{LaSalle's} invariance principle,  the asymptotically stability of $x^*$ is achieved 
if the largest invariant set under the dynamics of \eqref{eq:pch} contained in 
\be \label{eq:inset}
		\{x \in \Omega ~|~ \nabla ^T H_d(x) R_d(x) \nabla H_d(x) = 0  \}
\ee
equals $\{ x^*\}$. 

Note that the main difficulty of~Problem~\ref{prb:ida} is to solve the matching equation~\eqref{eq:mat}. If $F_d(x)$ is a \emph{priori} fixed such that~\eqref{eq:JR} holds, a necessary and sufficient condition for the solvability of~\eqref{eq:mat} is given in~\cite{cheng2005feedback}. 

\begin{Proposition} \cite[Prop. 3]{cheng2005feedback} \label{prop:solva}
Define 	$W(x) :=  G^\perp(x) F_d(x)$ and $s(x):=G^\perp (x) f(x)$.
Let $w_i(x)$ and $s_i(x)$, $i = 1, \dots, n-m$, denote the $i$th column vector of $ W^T(x) $ and the $i$th entry of $s(x)$, respectively. Assume that the following two distributions
	\begin{align}
		\varDelta (x) :=& \spanop\{w_1, \dots, w_{n-m} \}   \label{eq:dstr} \\
		\tilde \varDelta(x) := & \spanop \left \{\begin{bmatrix} w_1 \\s_1 \end{bmatrix} , \dots,  \begin{bmatrix} w_{n-m} \\s_{n-m} \end{bmatrix} \right \}  \label{eq:dstr1}
	\end{align}
are regular. Let $ \inv \varDelta$ and $ \inv \tilde \varDelta $ denote the involutive closures of $\varDelta(x)$ and $\tilde \varDelta(x)$, respectively. Then the matching equation~\eqref{eq:mat}, i.e., $W(x)\nabla H_d(x) = s(x)$, admits a solution $H_d(x)$ iff $\dim( \inv \varDelta) = \dim ( \inv \tilde \varDelta) = d$ for some $d \in \{n-m, \dots, n\}$.
\end{Proposition}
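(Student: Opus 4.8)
The plan is to lift the problem to the augmented space $\R^{n+1}$ with coordinates $(x,z)$ and read off solvability from the Frobenius theorem. First I would introduce the lifted vector fields
\be
\tilde w_i(x) := \begin{bmatrix} w_i(x) \\ s_i(x) \end{bmatrix}, \quad i = 1, \dots, n-m,
\ee
so that $\tilde\varDelta = \spanop\{\tilde w_1, \dots, \tilde w_{n-m}\}$. The key observation is that $H_d$ solves $W(x)\nabla H_d(x) = s(x)$, i.e. $w_i^T \nabla H_d = s_i$ for every $i$, if and only if each $\tilde w_i$ is tangent to the graph $\Gamma := \{(x, H_d(x)) : x \in \Omega\}$: a tangent vector of $\Gamma$ has the form $\begin{bmatrix} v \\ \nabla^T H_d\, v \end{bmatrix}$, and $\tilde w_i$ is of this form precisely when $s_i = \nabla^T H_d\, w_i = w_i^T \nabla H_d$. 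Thus solving the matching equation is equivalent to producing an $n$-dimensional integral manifold of $\tilde\varDelta$ that is a graph over $x$.

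Next I would relate the two distributions through a bracket computation. Writing $\pi(x,z)=x$, the fields $\tilde w_i$ and $w_i$ are $\pi$-related, and since $s_i$ and $w_i$ depend on $x$ only,
\be
[\tilde w_i, \tilde w_j] = \begin{bmatrix} [w_i, w_j] \\ w_i(s_j) - w_j(s_i) \end{bmatrix},
\ee
whose last entry is again a function of $x$ alone. Iterating, every iterated bracket of the $\tilde w_i$ is the lift of the corresponding bracket of the $w_i$, so $\pi_*(\inv\tilde\varDelta) = \inv\varDelta$, while the kernel of $\pi_*$ restricted to $\inv\tilde\varDelta$ is either trivial or $\spanop\{\partial_z\}$. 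Consequently $\dim\inv\tilde\varDelta - \dim\inv\varDelta \in \{0,1\}$, the value being $1$ exactly when $\partial_z \in \inv\tilde\varDelta$. Hence the stated condition $\dim\inv\varDelta = \dim\inv\tilde\varDelta = d$ is equivalent to $\partial_z \notin \inv\tilde\varDelta$, and this reformulation is what I would verify in both directions. The range $d \in \{n-m,\dots,n\}$ is then automatic, since $\varDelta$ has dimension $n-m$ and $\inv\varDelta$ lives in $T\R^n$.

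For necessity, I would argue as follows. If $H_d$ solves the PDE, then $\Gamma$ is an $n$-dimensional submanifold to which every $\tilde w_i$ is tangent; since the Lie bracket of two vector fields tangent to a submanifold is again tangent to it, $\Gamma$ is tangent to all of $\inv\tilde\varDelta$. But the tangent space $T\Gamma = \left\{ \begin{bmatrix} v \\ \nabla^T H_d\, v \end{bmatrix} \right\}$ never contains the vertical direction $\partial_z$, so $\partial_z \notin \inv\tilde\varDelta$, which by the previous paragraph forces the dimension equality.

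For sufficiency, I would invoke the Frobenius theorem. Assuming $\dim\inv\varDelta = \dim\inv\tilde\varDelta = d$, the distribution $\inv\tilde\varDelta$ is regular and involutive of dimension $d$ with $\partial_z \notin \inv\tilde\varDelta$, hence admits $n+1-d$ functionally independent first integrals; since $\partial_z$ is not annihilated by all of them, one such first integral $\Psi(x,z)$ satisfies $\partial_z \Psi \neq 0$. The implicit function theorem then solves $\Psi(x,z) = c$ locally for $z = H_d(x)$, and differentiating $\Psi(x, H_d(x)) = c$ together with $\tilde w_i(\Psi) = w_i^T \nabla_x \Psi + s_i\,\partial_z\Psi = 0$ yields $w_i^T\nabla H_d = s_i$, so $H_d$ solves the matching equation. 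The main obstacle lies in this constructive direction: one must check that the regularity of $\varDelta$ and $\tilde\varDelta$ propagates to their involutive closures so that Frobenius applies with constant dimension, and that transversality to $\partial_z$ is precisely what lets a leaf of $\inv\tilde\varDelta$ be completed to the graph of a genuine function $H_d(x)$. The necessity direction, by contrast, reduces to the elementary fact that $T\Gamma$ avoids the vertical direction.
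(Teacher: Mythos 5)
The paper does not prove this proposition at all---it is imported verbatim from \cite[Prop.~3]{cheng2005feedback}---so there is no in-paper proof to compare against; measured against the cited source, your argument is essentially the standard proof of this classical solvability criterion for quasilinear first-order PDE systems. The structure is right and the steps are sound: identifying solutions $H_d$ with graphs $\Gamma\subset\R^{n+1}$ to which the lifted fields $\tilde w_i=(w_i,s_i)$ are tangent; the bracket computation showing every iterated bracket of the $\tilde w_i$ projects to the corresponding bracket of the $w_i$ (so that $\dim\inv\tilde\varDelta-\dim\inv\varDelta\in\{0,1\}$, with equality of dimensions iff $\partial_z\notin\inv\tilde\varDelta$); necessity via tangency of brackets to $\Gamma$ and $\partial_z\notin T\Gamma$; sufficiency via Frobenius, a first integral $\Psi$ with $\partial_z\Psi\neq 0$, and the implicit function theorem, where the identity $\partial_z\Psi\,(s_i-w_i^T\nabla H_d)=0$ closes the loop. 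Two caveats, both minor: (i) Frobenius requires the involutive closures themselves to be nonsingular of constant dimension $d$, which is implicit in the statement's ``$=d$'' clause rather than guaranteed by regularity of $\varDelta$ and $\tilde\varDelta$ alone---you flag this honestly, and it is a genuine hypothesis rather than a gap in your reasoning; (ii) your remark that the range $d\in\{n-m,\dots,n\}$ is automatic ``since $\varDelta$ has dimension $n-m$'' presupposes that $W(x)=G^\perp(x)F_d(x)$ has full row rank, which is not among the proposition's stated hypotheses (though it holds in the paper's later use, where $F_d$ is taken nonsingular so that $G^\perp F_d$ has rank $n-m$). Note also that the conclusion obtained this way is local, which is consistent with how the result is used in the paper.
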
  

\subsection{Shapeability of the energy function} \label{sec:preB}
Prop.~\ref{prop:solva} only ensures the existence of an energy function $H_d(x)$. However, such a $H_d(x)$ does not necessarily satisfy~\eqref{eq:H}.  Ref.~\cite{kotyczka2013local} shows that $H_d(x)$ has certain degrees of freedom to design to achieve~\eqref{eq:H}, that is, the so-called \emph{energy shaping} procedure. To elucidate this, consider the homogeneous part of the matching equation~\eqref{eq:mat}, i.e.,
\be \label{eq:homopde}
G^{\perp}(x) F_d(x) \nabla H_d (x) = 0.
\ee 
Note that~\eqref{eq:homopde} always has a solution since the condition $\dim( \inv \varDelta) = \dim ( \inv \tilde \varDelta) = d$ in Prop.~\ref{prop:solva} holds naturally with $s(x) = 0$. As a fact, $H_d(x) = 0$ is a trivial solution.

\begin{Proposition} \label{prop:char} \cite[Prop. 3]{kotyczka2013local}
Assume that the distribution $\varDelta(x)$ defined in~\eqref{eq:dstr} is regular. Let $d := \dim( \inv \varDelta)$. If $d< n$, then~\eqref{eq:homopde} admits $n-d$ independent solutions, that is, there exist $\xi_i: \R^n \to \R$, $i = 1, \dots, n-d$, such that
\be \label{eq:homopde1}
G^{\perp}(x) F_d(x) \nabla \xi_i (x) = 0.
\ee
In particular, if~$\varDelta(x)$ is involutive, i.e., $d = n-m$,  then exist $m$ such mappings $\xi_i(x)$.
\end{Proposition}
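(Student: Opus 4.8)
The plan is to recognize the homogeneous matching equation as a first-integral problem for the distribution $\varDelta$ and then invoke the Frobenius theorem. With $W(x):=G^\perp(x)F_d(x)$ and its rows $w_j^T(x)$ as in Prop.~\ref{prop:solva}, the homogeneous matching equation $W(x)\nabla\xi(x)=0$ reads $w_j^T(x)\nabla\xi(x)=0$ for every $j=1,\dots,n-m$; equivalently, the gradient $\nabla\xi$ (the one-form $\dif\xi$) annihilates $\varDelta(x)$ at each point, so that $\xi$ is a \emph{first integral} of $\varDelta$. The task therefore reduces to counting the functionally independent first integrals of $\varDelta$, which is exactly what~\eqref{eq:homopde1} asks for.

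The key observation is that, since $\dif\xi$ is exact and hence closed, the directions it annihilates are closed under the Lie bracket. Concretely, for any vector fields $X,Y$ the invariant formula for the exterior derivative gives $\dif\xi([X,Y]) = X(\dif\xi(Y)) - Y(\dif\xi(X))$, using $\dif\dif\xi = 0$; if $X,Y\in\varDelta$ both annihilate $\dif\xi$, the right-hand side vanishes, so $[X,Y]$ annihilates $\dif\xi$ as well. Iterating this over all iterated brackets of the generators $w_1,\dots,w_{n-m}$, I conclude that $\dif\xi$ annihilates $\varDelta$ if and only if it annihilates the involutive closure $\inv\varDelta$. Hence the first integrals of $\varDelta$ coincide precisely with those of $\inv\varDelta$.

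With this reduction I would apply the Frobenius theorem to the involutive, regular distribution $\inv\varDelta$ of constant dimension $d$. On the simply connected domain $\Omega$, Frobenius yields a coordinate chart $(z_1,\dots,z_n)$ in which $\inv\varDelta$ is spanned by $\partial/\partial z_1,\dots,\partial/\partial z_d$; the remaining coordinates $z_{d+1},\dots,z_n$ then furnish exactly $n-d$ functionally independent first integrals, since their gradients span the $(n-d)$-dimensional pointwise annihilator of $\inv\varDelta$ (so there are neither fewer nor more). Setting $\xi_i:=z_{d+i}$ for $i=1,\dots,n-d$ gives the asserted independent solutions of~\eqref{eq:homopde1}. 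The special case then follows at once: if $\varDelta$ is involutive, $\inv\varDelta=\varDelta$ and $d=\dim\varDelta=n-m$, so the count becomes $n-d=m$.

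The main obstacle is the bracket-closure step of the second paragraph, which is precisely what forces the count to be $n-d$ rather than $n-(n-m)$: one must be careful that it is the \emph{involutive closure}, not $\varDelta$ itself, whose annihilator is realized by gradients of genuine functions, and that $\inv\varDelta$ is regular of constant dimension $d$ so that the Frobenius normal form is available (this regularity is the standing assumption behind the well-definedness of $d$). Once that is in place, the functional independence and the exact count of the integrals are a routine consequence of the Frobenius coordinates.
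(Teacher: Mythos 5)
The paper offers no proof of this proposition at all --- it is quoted directly from \cite[Prop.~3]{kotyczka2013local} --- so there is nothing in-paper to compare against; your argument is the standard one underlying that citation and is correct: closedness of $\dif\xi_i$ shows the first integrals of $\varDelta$ coincide with those of $\inv\varDelta$, and the Frobenius flat coordinates for the regular involutive closure of dimension $d$ supply exactly $n-d$ functionally independent integrals, with the involutive case $d=n-m$ giving $m$ of them. The only caveat worth flagging is that the Frobenius theorem is local, so the chart and hence the $\xi_i$ are guaranteed only on a neighborhood of each point rather than on all of $\Omega$ (simple connectedness alone does not globalize the normal form), which is consistent with the local character of the cited result and with how the paper subsequently uses the coordinates.
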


In line with~\cite[Def. 2]{kotyczka2013local}, we call the mappings $\xi_1, \dots, \xi_{n-d}$ \emph{characteristic coordinates}. Note that there exist at most $m$ characteristic coordinates since $d \geq n-m$. 
Define $\xi(x) := \begin{bmatrix} \xi_1(x) & \cdots & \xi_{n-d}(x) \end{bmatrix}^T$.
By~\eqref{eq:homopde1}, the solution to~\eqref{eq:mat} can be decomposed as
\be \label{eq:Hdecom}
H_d(x) = \Psi(x) + \Phi(\xi(x))
\ee
where $\Psi: \R^n \to \R$ is a particular solution to~\eqref{eq:mat}, and  $\Phi:\R^{n-d} \to \R$ is an arbitrary smooth function to be designed in the energy shaping procedure. This implies that the characteristic coordinates specify the energy shapeability of IDA-PBC.

\begin{Definition} \label{def:mes} [Maximum energy shapeability] 
	 The system~\eqref{eq:sys} is said to have \emph{maximum energy shapeability} if there exists a full rank left annihilator $G^{\perp}(x)$ of $G(x)$, and a matrix $F_d:\R^n \to \R^{n \times n}$ satisfying $F_d(x) + F_d^T(x) \preceq 0$ such that
	\begin{enumerate}[a)]
		\item The PDE~\eqref{eq:mat} admits a solution $H_d(x)$;
		\item There exist $m$ characteristic coordinates, that is, the PDE~\eqref{eq:homopde1} admits $m$ independent solutions $\xi_i(x)$, $i = 1, \dots, m.$
	\end{enumerate}
\end{Definition} 

\section{Main Results} \label{sec:main}
This section provides our main results, and it shows that the maximum energy shapeability yields a systematic procedure for solving the matching equation~\eqref{eq:mat}.

If the system~\eqref{eq:sys} has maximum energy shapeability, we can introduce a change of coordinates $z = \tau(x)$, $\tau: \R^n \to \R^n$. Specifically, 
\be  \label{eq:taux}
\tau(x) = \begin{bmatrix} \xi(x) \\ \eta(x)  \end{bmatrix}
\ee
where each entry of $\xi: \R^n \to \R^m$ is a characteristic coordinate as defined in Section~\ref{sec:preB}. The mapping $\eta: \R^n \to \R^{n-m}$ can be defined arbitrarily such that $\frac{\partial \tau}{\partial x}(x)$ is nonsingular, and this ensures that $\tau(x)$ is a diffeomorphism. By the \emph{Inverse Function Theorem} \cite[p. 471]{isidori-book}, the inverse coordinate transformation $\tau^{-1}: \R^n \to \R^n$ exists such that $x = \tau^{-1}(z)$. For a desired equilibrium point $x \in \mathcal{E}$, define
\begin{align*}
	z^*  &:= \tau(x^*)  \\
	\tilde z & := z - z^*. 
\end{align*}

The next result directly follows from~\cite[Prop. 4]{kotyczka2013local}, which shows that the matching equation is equivalent to a reduced order equation under the change of coordinates~\eqref{eq:taux}. We provide a proof here to facilitate the subsequent analysis.

\begin{Lemma} \label{lem:nabeta} 
Assume the system~\eqref{eq:sys} has maximum energy shapeability and	
introduce the change of coordinates $z= \tau(x)$ as defined in~\eqref{eq:taux}.
Then the PDE~\eqref{eq:mat} reduces to
\be  \label{eq:nabeta} 
\nabla_\eta \bar H_d(z) = \left ( \left (G^\perp F_d  \frac{\partial^T \eta }{\partial x} \right)^{-1} G^\perp  f \right ) \circ \tau^{-1}(z)  
\ee
where $\bar H_d(z) :=  H_d(\tau^{-1}(z))$. 

\end{Lemma}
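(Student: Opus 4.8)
The plan is to rewrite the matching equation~\eqref{eq:mat} in the new coordinates $z=\tau(x)$ by the chain rule, and then to eliminate the block of the transformed gradient corresponding to the characteristic coordinates using their defining property~\eqref{eq:homopde1}.

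First I would relate the two gradients. Since $\bar H_d(z)=H_d(\tau^{-1}(z))$, i.e.\ $H_d(x)=\bar H_d(\tau(x))$, the chain rule gives $\nabla H_d(x)=\frac{\partial^T\tau}{\partial x}(x)\,\nabla_z\bar H_d(\tau(x))$. Writing $\tau$ in the partitioned form~\eqref{eq:taux} and splitting $\nabla_z\bar H_d$ into its $\xi$- and $\eta$-blocks, this reads
\[
\nabla H_d=\frac{\partial^T\xi}{\partial x}\,\nabla_\xi\bar H_d+\frac{\partial^T\eta}{\partial x}\,\nabla_\eta\bar H_d.
\]
Substituting into~\eqref{eq:mat}, I would observe that the columns of $\frac{\partial^T\xi}{\partial x}$ are exactly the gradients $\nabla\xi_1,\dots,\nabla\xi_m$, each annihilated by $G^\perp F_d$ according to~\eqref{eq:homopde1}; hence $G^\perp F_d\frac{\partial^T\xi}{\partial x}=0$ and the $\xi$-block drops out, leaving $G^\perp F_d\frac{\partial^T\eta}{\partial x}\,\nabla_\eta\bar H_d=G^\perp f$. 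Solving for $\nabla_\eta\bar H_d$ and composing with $\tau^{-1}$ to express everything in $z$ yields precisely~\eqref{eq:nabeta}. The reverse implication is immediate because the $\xi$-block vanishes for any value of $\nabla_\xi\bar H_d$, so~\eqref{eq:mat} and~\eqref{eq:nabeta} are genuinely equivalent, and the unconstrained block $\nabla_\xi\bar H_d$ is exactly the energy-shaping freedom $\Phi(\xi)$ of~\eqref{eq:Hdecom}.

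The hard part will be justifying that the $(n-m)\times(n-m)$ matrix $G^\perp F_d\frac{\partial^T\eta}{\partial x}$ is nonsingular, since its inverse appears in~\eqref{eq:nabeta}. My argument would go through rank. As $\tau$ is a diffeomorphism, $\frac{\partial^T\tau}{\partial x}$ is invertible, so right multiplication by it preserves rank and $\rank(G^\perp F_d)=\rank\!\left(G^\perp F_d\frac{\partial^T\tau}{\partial x}\right)$. Writing $G^\perp F_d\frac{\partial^T\tau}{\partial x}=\begin{bmatrix}0 & G^\perp F_d\frac{\partial^T\eta}{\partial x}\end{bmatrix}$, with the first $m$ columns vanishing as above, shows this rank equals $\rank\!\left(G^\perp F_d\frac{\partial^T\eta}{\partial x}\right)$. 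It thus suffices to prove $\rank(G^\perp F_d)=n-m$, i.e.\ that the distribution $\varDelta$ in~\eqref{eq:dstr} has full dimension $n-m$. This is where maximum energy shapeability enters: having exactly $m$ characteristic coordinates means $n-d=m$, so $d=\dim(\inv\varDelta)=n-m$, and in the involutive regime singled out in Prop.~\ref{prop:char} this gives $\dim\varDelta=\dim(\inv\varDelta)=n-m$, whence the rows of $G^\perp F_d$ are independent and the square matrix is invertible; equivalently, the $m$ independent characteristic-coordinate gradients exhaust the kernel of $G^\perp F_d$ pointwise, forcing that kernel to be exactly $m$-dimensional. I expect isolating this rank/involutivity condition --- ruling out a rank-deficient $G^\perp F_d$ whose involutive closure nevertheless has dimension $n-m$ --- to be the only subtle point; everything else is a routine application of the chain rule.
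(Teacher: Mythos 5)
Your proof is correct and follows essentially the same route as the paper's: the chain-rule identity $\nabla_x H_d = \frac{\partial^T \tau}{\partial x}\,\nabla_z \bar H_d$, elimination of the $\xi$-block via~\eqref{eq:homopde1}, and invertibility of $G^\perp F_d \frac{\partial^T \eta}{\partial x}$ deduced from full rank of $G^\perp F_d$ (involutivity of $\varDelta$ per Prop.~\ref{prop:char}) combined with nonsingularity of $\frac{\partial \tau}{\partial x}$. Your explicit rank bookkeeping, $\rank(G^\perp F_d)=\rank \bigl[\,0 \;\; G^\perp F_d \frac{\partial^T \eta}{\partial x}\,\bigr]$, merely spells out the paper's one-line invertibility step, and the subtlety you flag about a rank-deficient $G^\perp F_d$ is resolved in the paper in exactly the same way, by reading Definition~\ref{def:mes} through Prop.~\ref{prop:char} as forcing $\dim\varDelta = n-m$.
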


\begin{proof}
	Applying the chain rule to $\bar H_d(z)$ yields 
	\be \label{eq:chain}
	\frac{\partial \bar H_d}{\partial z}(z) = \frac{\partial  H_d}{\partial x}(\tau^{-1}(z)) \frac{\partial \tau^{-1}}{\partial z}(z).
	\ee 
	Since $\tau(\tau^{-1}(z)) = z$, we have 
	$
	\frac{\partial \tau^{-1}}{\partial z}(z) = \left ( \frac{\partial \tau }{\partial x}(\tau^{-1}(z)) \right )^{-1}.
	$
Hence, 
	\be \label{eq:nabHxz}
	\nabla_x H_d(\tau^{-1}(z)) = \frac{\partial^T \tau }{\partial x}(\tau^{-1}(z)) \nabla_z \bar H_d(z). 
	\ee
Then \eqref{eq:mat} is transformed into
	\be \label{eq:matz}
	\left (G^\perp F_d  \frac{\partial^T \tau }{\partial x} \right)\circ \tau^{-1}(z)  \nabla_z \bar H_d(z) =  ( G^\perp  f) \circ \tau^{-1}(z).
	\ee
By the property of the characteristic coordinates, i.e., \eqref{eq:homopde1}, Eq.~\eqref{eq:matz} reduces to
	\be 
	\left (G^\perp F_d  \frac{\partial^T \eta }{\partial x} \right )\circ \tau^{-1}(z)  \nabla_\eta \bar H_d(z) =  ( G^\perp  f) \circ \tau^{-1}(z)
	\ee
Note that $G^\perp(x) F_d(x) \frac{\partial^T \eta }{\partial x}(x)$ is a square matrix in $\R^{(n-m) \times (n-m)}$. 
By Prop.~\ref{prop:char}, the maximum energy shapeability requires that the distribution~\eqref{eq:dstr} in Prop.~\ref{prop:solva} is involutive, and thus $G^\perp(x) F_d(x)$ is of full rank. Therefore, the nonsingularity of  $\frac{\partial \tau }{\partial x}$ implies that $G^\perp(x) F_d(x) \frac{\partial^T \eta }{\partial x}(x)$ is invertible, and this yields~\eqref{eq:nabeta}. 
\end{proof}

\begin{Remark} \label{re:Heta0}
Since $G^\perp(x^*) f(x^*) = 0$ for any $x^* \in \mathcal{E}$, Eq.~\eqref{eq:nabeta} implies that $\nabla_\eta \bar H_d(z^*) = 0$ holds for any choice of $\eta(x)$. 
\end{Remark}

\begin{Remark}
Since~\cite{kotyczka2013local} only focuses on the parameter tuning of the IDA-PBC method, the change of coordinates $\tau(x)$ defined in~\eqref{eq:taux}, and in particular the mapping $\eta(x)$, is not required to be explicitly defined. In this work, however, an explicit definition of $\eta(x)$ is necessary to establish the following results.
\end{Remark}

By~\eqref{eq:nabHxz}, the control law induced by IDA-PBC, i.e., \eqref{eq:u}, can be rewritten as 
\begin{align} 
	u =&  G^\dagger(x) \Big (-f(x)  + F_d(x) \frac{\partial^T \xi}{\partial x}(x) \nabla_\xi \bar H_d(z)  \nonumber \\
	& + F_d(x) \frac{\partial^T \eta}{\partial x}(x) \nabla_\eta \bar H_d(z) \Big ). \label{eq:uinz}
\end{align}
Lemma~\ref{lem:nabeta} shows that $\nabla_\eta \bar H_d(z)$ is known once the characteristic coordinates $\xi_1, \dots, \xi_m$ is obtained by solving~\eqref{eq:homopde1}. Therefore, the design of~\eqref{eq:uinz} now only requires the knowledge of $\nabla_\xi \bar H_d(z)$. The subsequent analysis shows that $\nabla_\xi \bar H_d(z)$ can be constructed by using $\nabla_\eta \bar H_d(z)$. 
For simplicity, we denote $\rho(z) := \nabla_\eta \bar H_d(z)$, i.e.,
\be \label{eq:defrho}
\rho(z) := \left ( \left (G^\perp F_d  \frac{\partial^T \eta }{\partial x} \right)^{-1} G^\perp  f \right ) \circ \tau^{-1}(z).
\ee

\begin{Theorem} \label{thm:main}
If the system~\eqref{eq:sys} achieves maximum energy shapeability, then there exists a mapping $M: \R^n \to \R^{m \times m}$ such that
\be \label{eq:Msym}
M(z) = M^T(z)
\ee
and for all $i,j \in \{1, \dots, m\}$
\begin{align}
\frac{\partial M_{ij}}{\partial \xi_k} &= \frac{\partial M_{ik}}{\partial \xi_j}, \quad  k \in \{1, \dots, m \}  \label{eq:Mxi} \\
\frac{\partial M_{ij}}{\partial \eta_k} &= \frac{\partial^2 \rho_k}{\partial \xi_i \partial \xi_j}, \quad k \in \{1, \dots, n-m  \}  \label{eq:Meta}
\end{align}
where $M_{ij}$ denotes the $ij$th entry of $M$, and $\xi_i, \eta_i, \rho_i$ is the $i$th entry of $\xi, \eta, \rho$, respectively. 
Furthermore, there exist mappings $\beta: \R^n \to \R^m$ , $\theta: \R^n \to \R$ such that 
\begin{align}
	\frac{\partial \beta}{\partial z} &= \begin{bmatrix}
		M(z) & \frac{\partial^T \rho}{\partial \xi}( z)
	\end{bmatrix}   \label{eq:nabbeta} \\
	\frac{\partial \theta}{\partial z} &= \begin{bmatrix}
		\beta^T(z) & \rho^T(z) \label{eq:nabtheta}
	\end{bmatrix}
\end{align}
and $\bar H_d(z) = \theta(z)$ is a solution of the PDE~\eqref{eq:nabeta}. 
\end{Theorem}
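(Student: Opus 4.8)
The plan is to identify each object in the statement as a block of the Hessian of an actual solution, to observe that the listed identities are precisely the integrability (curl-free) conditions needed to reverse the process, and then to reconstruct a solution by two successive applications of Poincar\'e's lemma (equivalently, via the line-integral formula furnished by the Mean Value Theorem for vector-valued functions). Throughout I would write $\nabla_z \bar H_d = \begin{bmatrix} \beta^T & \rho^T \end{bmatrix}^T$, where $\rho = \nabla_\eta \bar H_d$ is already known from Lemma~\ref{lem:nabeta}, the unknown $\beta := \nabla_\xi \bar H_d$ is what must be constructed, and $M$ plays the role of the $\xi\xi$-block $\partial^2 \bar H_d/\partial \xi^2$ of the Hessian.

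First I would establish existence of $M$. Condition a) of maximum energy shapeability supplies a solution $H_d$ of \eqref{eq:mat}; set $\bar H_d := H_d \circ \tau^{-1}$ and $M := \partial^2 \bar H_d / \partial \xi^2$. Symmetry \eqref{eq:Msym} is Schwarz's theorem. For \eqref{eq:Mxi} I would write $\partial M_{ij}/\partial \xi_k = \partial^3 \bar H_d/(\partial \xi_k \partial \xi_i \partial \xi_j)$ and exchange $j$ and $k$ by symmetry of third-order mixed partials. For \eqref{eq:Meta} I would write $\partial M_{ij}/\partial \eta_k = \partial^2/(\partial\xi_i\partial\xi_j)\bigl(\partial \bar H_d/\partial\eta_k\bigr)$ and substitute $\partial\bar H_d/\partial\eta_k = \rho_k$, which holds by Lemma~\ref{lem:nabeta}. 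In other words, the conditions on $M$ are exactly the higher-order mixed-partial symmetry of a genuine energy function, which is what allows them to be imposed directly on the unknown $M$ (whose defining equations involve only the known $\rho$).

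Next I would reconstruct $\beta$ and then $\theta$. Viewing the right-hand side of \eqref{eq:nabbeta} as a prescribed $m \times n$ Jacobian for $\beta$, the $i$th row is an exact differential exactly when its curl vanishes; the resulting three families of conditions are $\partial M_{ij}/\partial\xi_k = \partial M_{ik}/\partial\xi_j$, $\partial M_{ij}/\partial\eta_k = \partial^2\rho_k/(\partial\xi_i\partial\xi_j)$, and $\partial/\partial\xi_i\bigl(\partial\rho_k/\partial\eta_l - \partial\rho_l/\partial\eta_k\bigr) = 0$. The first two are \eqref{eq:Mxi}--\eqref{eq:Meta}, and the third holds because $\rho = \nabla_\eta \bar H_d$ makes $\partial\rho/\partial\eta$ symmetric. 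Since $\tau(\Omega)$ is simply connected, Poincar\'e's lemma yields $\beta$ satisfying \eqref{eq:nabbeta}. For $\theta$ I would verify that the Jacobian $\begin{bmatrix} M & \partial^T\rho/\partial\xi \\ \partial\rho/\partial\xi & \partial\rho/\partial\eta \end{bmatrix}$ of $\begin{bmatrix}\beta^T & \rho^T\end{bmatrix}^T$ is symmetric: the diagonal blocks are symmetric by \eqref{eq:Msym} and by $\rho$ being a gradient, while the off-diagonal blocks are transposes of one another by construction \eqref{eq:nabbeta}. Poincar\'e's lemma then produces $\theta$ with $\nabla_z\theta = \begin{bmatrix}\beta^T & \rho^T\end{bmatrix}^T$, so that $\nabla_\eta\theta = \rho$, i.e. $\bar H_d = \theta$ solves \eqref{eq:nabeta}.

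I expect the main obstacle to be organizational rather than analytical: carefully matching the index-wise curl conditions of the two integrations against the stated identities and confirming that no fourth constraint is hidden. The single condition that is \emph{not} among \eqref{eq:Msym}--\eqref{eq:Meta}, namely symmetry of $\partial\rho/\partial\eta$, must be traced back to condition a) through Lemma~\ref{lem:nabeta}; overlooking it would leave the first integration unjustified. The remaining hypothesis doing the real work is simple connectedness of $\Omega$, and hence of $\tau(\Omega)$, which is what licenses both applications of Poincar\'e's lemma.
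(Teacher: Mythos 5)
Your proposal is correct and follows essentially the same route as the paper's proof: define $M := \partial^2 \bar H_d/\partial \xi^2$ from a genuine solution supplied by maximum energy shapeability, read off \eqref{eq:Msym}--\eqref{eq:Meta} from Hessian and higher-order mixed-partial symmetry, and recover $\beta$ and $\theta$ by two applications of Poincar\'e's Lemma on the simply connected domain. If anything, you are slightly more careful than the paper, which states only that \eqref{eq:Mxi}--\eqref{eq:Meta} and \eqref{eq:Msym} give the needed integrability, whereas you make explicit the remaining curl conditions --- symmetry of $\partial \rho/\partial \eta$ and $\partial_{\xi_i}\bigl(\partial \rho_j/\partial \eta_k - \partial \rho_k/\partial \eta_j\bigr)=0$ --- and correctly trace them back through Lemma~\ref{lem:nabeta} to condition a) of Definition~\ref{def:mes}.
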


\begin{proof}
 The maximum energy shapeability ensures that~\eqref{eq:nabeta} in Lemma~\ref{lem:nabeta} admits a solution~$\bar H_d(z)$. Let $M(z) : = \frac{\partial^2 \bar H_d}{\partial \xi^2}$. We claim that such a construction of $M(z)$ satisfies~\eqref{eq:Msym}-\eqref{eq:Meta}. 
	
According to \emph{Poincar\'e's Lemma} and using~\eqref{eq:nabeta}, the Hessian matrix of $\bar H_d(z)$, i.e.,
	\be \label{eq:Hess}
	\frac{\partial^2 \bar H_d}{\partial z^2} = 
	\begin{bmatrix}
 	 M  & \frac{\partial^2 \bar H_d }{\partial \xi \partial \eta} \\
		\frac{\partial \rho}{\partial \xi}  & \frac{\partial \rho }{\partial \eta}
	\end{bmatrix}
	\ee 
is symmetric. This implies that~\eqref{eq:Msym} holds and 
$ 
\frac{\partial^2 \bar H_d }{\partial \xi \partial \eta}(z) = \frac{\partial^T \rho}{\partial \xi}(z).
$
Therefore, $\frac{\partial^2 \bar H_d(z)}{\partial \xi \partial z}    = Q(z)$ with 
\begin{equation}
Q(z):= \begin{bmatrix} M(z) & \frac{\partial^T \rho}{\partial \xi}(z) \end{bmatrix}.
\end{equation}
By the \emph{mixed-partial symmetry requirement} for higher-order derivatives (see e.g., \cite[Prop. 2.4.14]{abraham2012manifolds}), we have 
$
\frac{\partial Q_{ij}}{\partial z_k} = \frac{\partial Q_{ik}}{\partial z_j}, 
$
which is equivalent to~\eqref{eq:Mxi} and~\eqref{eq:Meta}. That is, there exists a $M(z)$ satisfying~\eqref{eq:Msym}-\eqref{eq:Meta}. 

The existence of the mappings $\beta(z)$ and $\theta(z)$ directly follows from the existence of~$M(z)$. Indeed, Eq.~\eqref{eq:Mxi} and~\eqref{eq:Meta} correspond to an integrability condition for the row vector fields of $Q(z)$, i.e., the right-hand side of~\eqref{eq:nabbeta}. Eq.~\eqref{eq:Msym} ensures that the right-hand side of~\eqref{eq:nabtheta} is integrable. Clearly, $\theta(z)$ is a solution of~\eqref{eq:nabeta} since $\frac{\partial \theta}{\partial \eta} = \rho^T(z)$.
\end{proof}

The next result follows immediately from Theorem~\ref{thm:main}.

\begin{Corollary} \label{coro:int}
Let $M:\R^n \to \R^{m \times m}$ satisfy~\eqref{eq:Msym}-\eqref{eq:Meta}, then 
\begin{equation} \label{eq:Hdzint}
	\bar H_d(z) = \left (\int_0^1 \begin{bmatrix} \beta^T(z^* + \lambda \tilde z) &  \rho^T(z^* + \lambda \tilde z) \end{bmatrix} \mathrm{d} \lambda \right ) \tilde z
\end{equation}
with 
\begin{equation} \label{eq:beta}
	\beta  =  
	\left( \int_0^1 
	\begin{bmatrix}
		M(z^* + \lambda \tilde z) & \frac{\partial^T \rho}{\partial \xi}(z^* + \lambda \tilde z)
	\end{bmatrix}  \mathrm{d} \lambda \right) \tilde z
\end{equation}
is a solution of the PDE~\eqref{eq:nabeta}.
\end{Corollary}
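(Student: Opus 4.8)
The plan is to read~\eqref{eq:Hdzint} and~\eqref{eq:beta} as the explicit line-integral reconstructions of the potentials $\theta$ and $\beta$ whose existence is asserted only abstractly in Theorem~\ref{thm:main}, and then to confirm by differentiation that they realize~\eqref{eq:nabtheta} and~\eqref{eq:nabbeta}. The structural facts needed — that the rows of $Q(z)=\begin{bmatrix} M(z) & \frac{\partial^T\rho}{\partial\xi}(z)\end{bmatrix}$ form closed (integrable) fields, and that $\begin{bmatrix}\beta^T & \rho^T\end{bmatrix}$ is integrable — are already supplied by Theorem~\ref{thm:main} through~\eqref{eq:Mxi}-\eqref{eq:Meta} and~\eqref{eq:Msym}. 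Hence the only genuinely new content is to check that the particular formulas~\eqref{eq:Hdzint}-\eqref{eq:beta} reproduce the required gradients, which is why the result is flagged as an immediate consequence.

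First I would isolate the elementary reconstruction fact underlying the \emph{Mean Value Theorem for vector-valued functions}: if a row field $a:\R^n\to\R^{1\times n}$ is closed, i.e. $\frac{\partial a_j}{\partial z_k}=\frac{\partial a_k}{\partial z_j}$, then $g(z):=\big(\int_0^1 a(z^*+\lambda\tilde z)\,\mathrm d\lambda\big)\tilde z$ satisfies $\frac{\partial g}{\partial z}=a$ and $g(z^*)=0$. The verification is one line: differentiating under the integral sign and substituting the closedness relation converts the integrand of $\frac{\partial g}{\partial z_k}$ into $\frac{\mathrm d}{\mathrm d\lambda}\big[\lambda\, a_k(z^*+\lambda\tilde z)\big]$, whose integral over $[0,1]$ equals $a_k(z)$.

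Next I would apply this fact twice. Applied row-by-row to $Q(z)$ — each row closed by~\eqref{eq:Mxi}-\eqref{eq:Meta} — it shows that $\beta$ defined in~\eqref{eq:beta} satisfies~\eqref{eq:nabbeta}, so in particular $\frac{\partial\beta}{\partial\eta}=\frac{\partial^T\rho}{\partial\xi}$. With this identity in hand, the Jacobian of the $n$-vector $\begin{bmatrix}\beta \\ \rho\end{bmatrix}$ has $\xi\xi$-block $M=M^T$ by~\eqref{eq:Msym}, off-diagonal blocks $\frac{\partial\beta}{\partial\eta}$ and $\frac{\partial\rho}{\partial\xi}$ that are mutual transposes, and $\eta\eta$-block $\frac{\partial\rho}{\partial\eta}$; this Jacobian is therefore symmetric, i.e. the row field $\begin{bmatrix}\beta^T & \rho^T\end{bmatrix}$ is closed. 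Applying the reconstruction fact to this field yields that $\bar H_d$ in~\eqref{eq:Hdzint} satisfies~\eqref{eq:nabtheta}, whence $\frac{\partial\bar H_d}{\partial\eta}=\rho^T$, that is $\nabla_\eta\bar H_d=\rho$, which is precisely~\eqref{eq:nabeta}.

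The step requiring the most care is the symmetry of $\frac{\partial\rho}{\partial\eta}$: it is not furnished by the conditions imposed on $M$, but must be imported from the solvability of the matching equation (part of maximum energy shapeability), since $\rho=\nabla_\eta\bar H_d$ makes $\frac{\partial\rho}{\partial\eta}$ a genuine Hessian block and hence symmetric. The same input is what closes the $\eta\eta$-directions of the rows of $Q$, so that~\eqref{eq:Mxi}-\eqref{eq:Meta} exhaust the remaining, non-automatic closedness conditions. Once this dependence is acknowledged, the remaining work is the differentiation-under-the-integral computation of the second paragraph together with routine block bookkeeping, so the corollary indeed follows directly from Theorem~\ref{thm:main}.
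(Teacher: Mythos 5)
Your proof is correct, but it runs in the opposite direction from the paper's. The paper argues top-down: it invokes Theorem~\ref{thm:main} to get an actual solution $\bar H_d$ of~\eqref{eq:nabeta} with $\beta = \nabla_\xi \bar H_d$, uses the freedom in the solution to normalize $\beta(z^*)=0$ and $\bar H_d(z^*)=0$, and then applies the Mean Value Theorem identity $h(a)-h(b)=\bigl(\int_0^1 \frac{\partial h}{\partial x}(b+\lambda(a-b))\,\mathrm{d}\lambda\bigr)(a-b)$ twice — once to~\eqref{eq:nabbeta} and once to~\eqref{eq:nabtheta} — to \emph{represent} the already-existing potentials as~\eqref{eq:beta} and~\eqref{eq:Hdzint}. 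You instead argue bottom-up: you take the formulas as definitions (the standard radial-integral construction from the proof of \emph{Poincar\'e's Lemma} on a star-shaped set) and verify by differentiation under the integral, using closedness to rewrite the integrand as $\frac{\mathrm{d}}{\mathrm{d}\lambda}[\lambda\, a_k(z^*+\lambda\tilde z)]$, that the gradients come out right. Same FTC engine, read in opposite directions. Your route buys two things. First, it works verbatim for an \emph{arbitrary} $M$ satisfying~\eqref{eq:Msym}--\eqref{eq:Meta}, which is the corollary's actual hypothesis; the paper's proof tacitly identifies the hypothesized $M$ with the Hessian block $\frac{\partial^2 \bar H_d}{\partial \xi^2}$ of some solution, which requires the (true but unstated) observation that any admissible $M$ is realized by some solution, since two admissible $M$'s differ by an $\eta$-independent term absorbable into $\Phi(\xi)$ in~\eqref{eq:Hdecom}. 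Second, you correctly isolate the one closedness condition not supplied by~\eqref{eq:Msym}--\eqref{eq:Meta} — the symmetry of $\frac{\partial \rho}{\partial \eta}$, equivalently the $\eta\eta$-direction closedness of the rows of $Q$ — and correctly source it from the solvability of~\eqref{eq:nabeta} under maximum energy shapeability (it is a Hessian block of a solution); the paper leaves this implicit inside Theorem~\ref{thm:main}'s integrability claim for~\eqref{eq:nabtheta}. The one shared gloss is that the segment $z^*+\lambda\tilde z$ must lie in the domain, i.e.\ star-shapedness with respect to $z^*$ in the $z$-coordinates, slightly stronger than the simple connectedness assumed for $\Omega$ — but the paper's proof has the same implicit requirement, so this is not a defect of your argument relative to the original.
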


\begin{proof}
Recall the Mean Value Theorem for vector-valued functions (see e.g., \cite[Porp. 2.4.7]{abraham2012manifolds}), that is, for any $C^1$ mapping $h: \R^n \to \R^m$, 
\be \label{eq:mvthm}
h(a) - h(b) = \left ( \int_0^1 \frac{\partial h}{\partial x}( b + \lambda (a -b) ) \mathrm{d} \lambda \right) (a - b). 
\ee
Theorem~\ref{thm:main} implies that $ \beta(z) = \nabla_\xi \bar H_d(z)$ for some solution $H_d(z)$ of~\eqref{eq:nabeta}. Note that \eqref{eq:nabeta} is independent of $\nabla_\xi \bar H_d(z)$. Hence, the value of  $\nabla_\xi \bar H_d(z)$, or equivalently, $\beta(z)$, at a fixed point can be arbitrarily assigned, and we can set $\beta(z^*) = 0$. Applying~\eqref{eq:mvthm} to~\eqref{eq:nabbeta} yields~\eqref{eq:beta}. Then we have $\frac{\partial \bar H_d}{\partial z}(z) = \begin{bmatrix} \beta^T(z) & \rho^T(z) \end{bmatrix}$. Since $\bar H_d(z)+ c$ with $c$ an arbitrarily constant is also a solution of \eqref{eq:nabeta}, we can set $\bar H_d(z^*) = 0$, and using~\eqref{eq:mvthm} again yields~\eqref{eq:Hdzint}.
\end{proof}

\begin{Remark}
	Lemma~\ref{lem:nabeta} implies that if maximum energy shapeability is ensured, the original matching equation~\eqref{eq:mat} is equivalent to the PDEs~\eqref{eq:homopde1} and~\eqref{eq:nabeta}. Theorem~\ref{thm:main} further implies that~\eqref{eq:nabeta} is equivalent to the PDEs~\eqref{eq:Mxi}-\eqref{eq:Meta} subject to~\eqref{eq:Msym}. These transformations are meaningful since the new PDEs are easier to solve compared to~\eqref{eq:mat} for certain systems as illustrated in Section~\ref{sec:simu}. 
\end{Remark}

Corollary~\ref{coro:int} shows that there exists a $\bar H_d(z)$ such that $\nabla_\xi \bar H_d(z^*) = 0$, together with Remark~\ref{re:Heta0}, we have $\nabla_z \bar H_d(z^*) = 0$. Recall~\eqref{eq:nabHxz}, $\nabla_x H_d(x^*) = 0$ in~\eqref{eq:H} is ensured. The next result shows that $\frac{\partial^2 H_d}{\partial x^2} (x^*) \succ 0 $ in~\eqref{eq:H} can also be satisfied due to that the matrix  $M(z)$ specified in Theorem~\ref{thm:main} has additional degrees of freedom to design, and this ensures that the equilibrium $x^*$ is stable as discussed in Section~\ref{sec:preA}.

\begin{Theorem} \label{thm:m12}
Assume that the system~\eqref{eq:sys} has maximum energy shapeability. If the mapping $\rho(z)$ defined in~\eqref{eq:defrho} satisfies
\be \label{eq:eta2}
\frac{\partial \rho}{\partial \eta}(z^*) \succ 0
\ee
then the matching equation~\eqref{eq:mat} admits a solution $H_d(x)$ such that $\frac{\partial^2 H_d}{\partial x^2} (x^*) \succ 0 $. 
\end{Theorem}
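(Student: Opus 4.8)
The plan is to transport the positive-definiteness requirement from the $x$-coordinates to the $z$-coordinates via a congruence, and then to realize it through the Schur complement together with the freedom in $M$ guaranteed by Theorem~\ref{thm:main}. First I would establish the congruence identity. Since $H_d(x) = \bar H_d(\tau(x))$, differentiating~\eqref{eq:nabHxz} once more and evaluating at $x^*$ gives
\[
\frac{\partial^2 H_d}{\partial x^2}(x^*) = \frac{\partial^T \tau}{\partial x}(x^*)\, \frac{\partial^2 \bar H_d}{\partial z^2}(z^*)\, \frac{\partial \tau}{\partial x}(x^*),
\]
because the remaining term carries the factor $\nabla_z \bar H_d(z^*)$, which vanishes: $\nabla_\eta \bar H_d(z^*) = 0$ by Remark~\ref{re:Heta0}, and $\nabla_\xi \bar H_d(z^*) = \beta(z^*) = 0$ by the construction in Corollary~\ref{coro:int}. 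As $\frac{\partial \tau}{\partial x}(x^*)$ is nonsingular, this is a congruence, so $\frac{\partial^2 H_d}{\partial x^2}(x^*) \succ 0$ is equivalent to $\frac{\partial^2 \bar H_d}{\partial z^2}(z^*) \succ 0$, and the goal reduces to rendering the reduced Hessian positive definite.

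Next I would write this Hessian in block form using~\eqref{eq:Hess} and the symmetry $\frac{\partial^2 \bar H_d}{\partial \xi \partial \eta} = \frac{\partial^T \rho}{\partial \xi}$ established in Theorem~\ref{thm:main}, giving
\[
\frac{\partial^2 \bar H_d}{\partial z^2}(z^*) = \begin{bmatrix} M(z^*) & \frac{\partial^T \rho}{\partial \xi}(z^*) \\[3pt] \frac{\partial \rho}{\partial \xi}(z^*) & \frac{\partial \rho}{\partial \eta}(z^*) \end{bmatrix}.
\]
The hypothesis~\eqref{eq:eta2} makes the lower-right block positive definite, so the standard Schur complement criterion applies: the whole matrix is positive definite if and only if
\[
M(z^*) - \frac{\partial^T \rho}{\partial \xi}(z^*) \Big( \frac{\partial \rho}{\partial \eta}(z^*) \Big)^{-1} \frac{\partial \rho}{\partial \xi}(z^*) \succ 0.
\]

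Finally I would exploit the degrees of freedom in $M$. The key observation is that the constraints~\eqref{eq:Msym}--\eqref{eq:Meta} only restrict the partial derivatives of $M$ and its symmetry; they are therefore invariant under the shift $M(z) \mapsto M(z) + K$ for any constant symmetric $K$. Given any admissible $M$ from Theorem~\ref{thm:main}, choosing $K = cI$ with $c>0$ large enough makes $M(z^*)+K$ dominate the fixed matrix $\frac{\partial^T \rho}{\partial \xi}(z^*)\big(\frac{\partial \rho}{\partial \eta}(z^*)\big)^{-1}\frac{\partial \rho}{\partial \xi}(z^*)$, so the Schur complement becomes positive definite. The shifted $M$ still satisfies~\eqref{eq:Msym}--\eqref{eq:Meta}, hence by Corollary~\ref{coro:int} it yields a genuine solution $\bar H_d$ of~\eqref{eq:nabeta}, equivalently a solution $H_d$ of the matching equation~\eqref{eq:mat}, and by the first step this $H_d$ satisfies $\frac{\partial^2 H_d}{\partial x^2}(x^*) \succ 0$.

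I expect the main obstacle to be this last step: carefully justifying that shifting $M(z^*)$ by a constant is simultaneously compatible with the integrability conditions~\eqref{eq:Mxi}--\eqref{eq:Meta} and leaves $\bar H_d$ an exact solution of~\eqref{eq:nabeta}. One must confirm that adding $K$ moves $M(z^*)$ freely while leaving $\rho$, and hence the right-hand side of~\eqref{eq:nabeta}, untouched, so that only the free portion of the Hessian is altered. A secondary technical point is ensuring $\frac{\partial \rho}{\partial \eta}(z^*)$ is genuinely symmetric (it equals $\frac{\partial^2 \bar H_d}{\partial \eta^2}(z^*)$), so that~\eqref{eq:eta2} and the Schur criterion are meaningful.
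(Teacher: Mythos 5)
Your proposal is correct and follows essentially the same route as the paper's own proof: the congruence \eqref{eq:hesHz} (valid because $\nabla_z \bar H_d(z^*)=0$ via Remark~\ref{re:Heta0} and Corollary~\ref{coro:int}) reduces the problem to the Hessian in $z$-coordinates, and your constant symmetric shift $M \mapsto M + K$ is exactly the paper's decomposition $M = M_1 + M_2$ with constant symmetric $M_2$ chosen large enough to render the Schur complement of $B := \frac{\partial \rho}{\partial \eta}(z^*)$ positive definite. Incidentally, your Schur complement $M(z^*) - C^T B^{-1} C$ is the correct expression; the paper's displayed $D := M_1(z^*)+M_2 - C^T B C$ omits the inverse on $B$, a slip that does not affect the dominance argument since either product is positive semidefinite.
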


\begin{proof}
Let~$\bar H_d(z)$ be a solution of~\eqref{eq:nabeta} in Lemma~\ref{lem:nabeta}. By Theorem~\ref{thm:main}, the Hessian matrix of~$\bar H_d(z)$ takes the form
\be \label{eq:Hess1}
\frac{\partial^2 \bar H_d}{\partial z^2}(z) = 
\begin{bmatrix}
	M(z)  &  \frac{\partial^T \rho}{\partial \xi}(z) \\
	\frac{\partial \rho}{\partial \xi}(z)  & \frac{\partial \rho }{\partial \eta}(z)
\end{bmatrix}
\ee
where the matrix $M(z)$ satisfies~\eqref{eq:Msym}-\eqref{eq:Meta}. It is straightforward to show that $M(z)$ can be decomposed as
\be 
M(z) = M_1(z) + M_2(\xi)
\ee
where $M_1(z)$ satisfies~\eqref{eq:Msym}-\eqref{eq:Meta}, and $M_2(\xi)$ satisfies~\eqref{eq:Msym}-\eqref{eq:Mxi}. Note that $M_2(\xi)$ can be arbitrarily designed. For example, any constant symmetric $M_2$ trivially ensures that~\eqref{eq:Msym}-\eqref{eq:Mxi} hold. For simplicity, the subsequent analysis considers a constant $M_2$. Let $B:= \frac{\partial \rho}{\partial \eta}(z^*)$, $C:= \frac{\partial \rho}{\partial \xi}(z^*)$, then
\be 
\frac{\partial^2 \bar H_d}{\partial z^2}(z^*) = 
\begin{bmatrix}
M_1(z^*)+M_2  &  C^T \\
C  & B
\end{bmatrix}.
\ee 
By~\eqref{eq:eta2}, i.e., $B\succ 0$, the Schur complement of $B$ exists and is given by
\be 
D := M_1(z^*)+M_2 - C^T B C . 
\ee
Note that $C^T B C \succeq 0$ and $M_1(z^*) = M_1^T(z^*)$. Recall that for any symmetric matrix $A \in \R^{m \times m}$ and $a \in \R^m$, $ \lambda_\text{min}(A) |a|^2 \leq a^T A a \leq  \lambda_\text{max}(A) |a|^2$. Here, $\lambda_\text{min}(A), \lambda_\text{max}(A)$ denote the smallest and largest eigenvalues of $A$, respectively. Therefore, by picking $M_2$ such that
\be \label{eq:M2lam}
\lambda_\text{min}(M_2) > \lambda_\text{max}( C^T B C)- \lambda_\text{min}(M_1(z^*))  
\ee
it is ensured that $D \succ 0$. Together with $B \succ 0$, we have $\frac{\partial^2 \bar H_d}{\partial z^2}(z^*) \succ 0$.
Since $\nabla_z \bar H_d(z^*) = 0$ by Remark~\ref{re:Heta0} and Corollary~\ref{coro:int}, Eq.~\eqref{eq:nabHxz} leads to 
\be \label{eq:hesHz}
\frac{\partial^2 H_d}{\partial x^2} (x^*) = \frac{\partial^T \tau }{\partial x}(x^*) \frac{\partial^2 \bar H_d}{\partial z^2} (z^*) \frac{\partial \tau }{\partial x}(x^*).
\ee
The nonsingularity of $\frac{\partial^T \tau }{\partial x}(x^*)$ ensures that $\frac{\partial^2 H_d}{\partial x^2} (x^*) \succ 0$. 
\end{proof}

As shown by the above analysis, the IDA-PBC method, i.e., Problem~\ref{prb:ida}, can be conducted by the following systematic procedure. 

\emph{Step 1:} For a given system~\eqref{eq:sys}, find matrices $G^\perp(x)$ and $F_d(x)$ such that the conditions in Definition~\ref{def:mes} are satisfied. That is, the system~\eqref{eq:sys} achieves maximum energy shapeability. 

\emph{Step 2:} If Step 1 is feasible, solve~\eqref{eq:homopde1} to obtain the characteristic coordinates $\xi_i(x), i= 1, \dots, m$.

\emph{Step 3:} Design a change of coordinate~\eqref{eq:taux} by selecting $\eta(x)$ such that $\frac{\partial \tau}{\partial x}(x)$ is nonsingular, and compute $\nabla_\eta \bar H_d(z)$ using~\eqref{eq:nabeta} in Lemma~\ref{lem:nabeta}. Choose the free parameters in $F_d(x)$ such that $\frac{\partial^2 \bar H_d}{\partial \eta^2}(z^*) \succ 0$ holds. 

\emph{Step 4:} If Step 3 is feasible, solve the PDEs~\eqref{eq:Mxi}-\eqref{eq:Meta} subject to~\eqref{eq:Msym} to obtain a matrix $M_1(z)$, and pick a constant symmetric matrix $M_2$ such that~\eqref{eq:M2lam} holds. 

\emph{Step 5:} Let $M(z) =  M_1(z) + M_2$ and compute $\nabla_\xi \bar H_d(z)$ using~\eqref{eq:beta}. Substitute $\nabla_\eta \bar H_d(z)$ and $\nabla_\xi \bar H_d(z)$ into~\eqref{eq:uinz} to obtain the control law.

\section{Sufficient Conditions for Maximum Energy Shapeability} \label{sec:suff}

Based on Props.~\ref{prop:solva}-\ref{prop:char}, this section specifies several sufficient conditions for maximum energy shapeability. We also show that some existing works on constructive energy shaping implicitly exploit the virtue of maximum energy shapeability.

\begin{Theorem} \label{thm:poin}
The system~\eqref{eq:sys} achieves maximum energy shapeability if there exists matrices $G^\perp(x)$ and $F_d(x)$ such that 
\begin{enumerate}[a)]
	\item \label{cond:mes3} the matrix $F_d(x)$ is nonsingular, and satisfies
	\be 
	\frac{\partial F_d^{-1}g_i }{\partial x} (x) =  \frac{\partial^T F_d^{-1}g_i }{\partial x} (x), \quad i = 1,\dots, m
	\ee
	where $g_i(x)$ is the $i$th column of $G(x)$;
	\item \label{cond:mes4} the distribution $\tilde \varDelta(x)$ defined in Prop.~\ref{prop:solva} satisfies $$\dim ( \inv \tilde \varDelta) = n-m.$$
\end{enumerate}
\end{Theorem}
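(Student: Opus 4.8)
The plan is to verify the two requirements of Definition~\ref{def:mes} one at a time: the integrability hypothesis~\ref{cond:mes3} will directly produce the $m$ characteristic coordinates (requirement~b) and force the distribution $\varDelta(x)$ to be involutive, while the dimension hypothesis~\ref{cond:mes4} supplies the missing ingredient needed to invoke Prop.~\ref{prop:solva} for the solvability of~\eqref{eq:mat} (requirement~a).

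First I would apply \emph{Poincar\'e's Lemma} to hypothesis~\ref{cond:mes3}. Since $\Omega$ is simply connected and each vector field $F_d^{-1}g_i$ has a symmetric Jacobian, there exist scalar potentials $\phi_i:\Omega\to\R$ with $\nabla\phi_i = F_d^{-1}g_i$, $i=1,\dots,m$. The decisive computation is then
\[
G^\perp(x) F_d(x)\, \nabla\phi_i(x) = G^\perp(x) F_d(x) F_d^{-1}(x) g_i(x) = G^\perp(x) g_i(x) = 0,
\]
the last equality being $G^\perp G = 0$. Thus each $\phi_i$ solves the homogeneous matching equation~\eqref{eq:homopde1}. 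Because $G(x)$ has full rank $m$ and $F_d(x)$ is nonsingular, the gradients $\nabla\phi_i = F_d^{-1}g_i$ are pointwise independent, so $\phi_1,\dots,\phi_m$ are $m$ independent characteristic coordinates, establishing requirement~b) of Definition~\ref{def:mes}.

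Next I would settle the solvability of~\eqref{eq:mat} through Prop.~\ref{prop:solva}, for which I need $\dim(\inv\varDelta)=\dim(\inv\tilde\varDelta)=n-m$. Regularity of $\varDelta(x)$ with $\dim\varDelta = n-m$ is immediate, since $W=G^\perp F_d$ is a full-rank $(n-m)\times n$ matrix times a nonsingular matrix and hence its rows $w_1,\dots,w_{n-m}$ are independent; appending the scalars $s_i$ preserves independence, so $\tilde\varDelta(x)$ is regular as well. Involutivity of $\varDelta$ then follows from the \emph{Frobenius theorem}: the $m$ independent exact one-forms $\mathrm{d}\phi_1,\dots,\mathrm{d}\phi_m$ annihilate $\varDelta$ and, since the annihilator of $\varDelta$ has dimension $m$, they span it; a codistribution generated by closed forms certifies involutivity, giving $\dim(\inv\varDelta)=n-m$. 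Combining this with hypothesis~\ref{cond:mes4}, namely $\dim(\inv\tilde\varDelta)=n-m$, Prop.~\ref{prop:solva} yields a solution $H_d(x)$ of~\eqref{eq:mat}; this is requirement~a) of Definition~\ref{def:mes}, and together with requirement~b) already obtained, maximum energy shapeability follows.

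The step I expect to be the main obstacle is the Frobenius argument that converts the integrability hypothesis~\ref{cond:mes3} into $\dim(\inv\varDelta)=n-m$: one must verify that the exact annihilating forms $\mathrm{d}\phi_i$ exhaust the whole annihilator of $\varDelta$, which relies on having established $\dim\varDelta=n-m$ exactly, since only then does closedness of the annihilator guarantee involutivity and thereby meet the dimension requirement of Prop.~\ref{prop:solva}. Alternatively, one could reach the same conclusion by feeding the $m$ explicit solutions $\phi_i$ back into Prop.~\ref{prop:char}, which counts the independent solutions of~\eqref{eq:homopde1} as $n-\dim(\inv\varDelta)$ and thus forces $\dim(\inv\varDelta)=n-m$.
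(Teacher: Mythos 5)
Your proposal is correct and follows essentially the same route as the paper's proof: \emph{Poincar\'e's Lemma} applied to Condition~a) yields $m$ independent characteristic coordinates via $G^\perp F_d F_d^{-1} g_i = G^\perp g_i = 0$, \emph{Frobenius' Theorem} then gives $\dim(\inv\varDelta)=n-m$, and Condition~b) together with Prop.~\ref{prop:solva} delivers solvability of~\eqref{eq:mat}. Your added checks (regularity of $\varDelta$ and $\tilde\varDelta$, and the dual codistribution formulation of Frobenius) merely spell out details the paper leaves implicit.
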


\begin{proof}
	According to \emph{Poincar\'e's Lemma}, Condition~\eqref{cond:mes3} implies that $F_d^{-1}(x)g_i(x)$, $i = 1, \dots, m$, are gradient vector fields, that is, there exist $\xi_i: \R^n \to \R$, $i = 1, \dots, m$, such that 
	$$\nabla \xi_i(x) = F_d^{-1}(x)g_i(x).$$
	These mappings $\xi_i(x)$ are independent since both $F_d(x)$ and $G(x)$ have full rank.
	Note that 
	$G^{\perp}(x)F_d(x) \nabla \xi_i(x) = 0$ in this case, that is, the PDE \eqref{eq:homopde1} admits $m$ independent solutions. By \emph{Frobenius' Theorem} (see e.g., \cite[p. 23]{isidori-book}), this implies that the distribution $\varDelta(x)$ defined in Prop.~\ref{prop:solva} is involutive, that is, $\dim( \inv \varDelta) = n-m$. 
	Together with Condition~\eqref{cond:mes4}, Prop.~\ref{prop:solva} ensures that the matching equation~\eqref{eq:mat} is solvable in this case. This completes the proof upon recalling Definition~\ref{def:mes}.
\end{proof}

\begin{Remark}
Theorem~\ref{thm:poin} extends the result in~\cite{borja2016constructive} which
implicitly exploits the maximum energy shapeability to achieve a constructive IDA-PBC design.
Ref.~\cite{borja2016constructive} considers the pH-systems, i.e., 
\begin{equation*}
	\dot x = F(x) \nabla H(x) + G(x)u,~ F(x) + F^T(x) \preceq 0.
\end{equation*}
The matching equation in this case is $G^{\perp}(F_d \nabla H_d - F \nabla H) = 0$. It is assumed that $F(x)$ is nonsingular and 
$ 
\frac{\partial F^{-1}g_i }{\partial x} (x) =  \frac{\partial^T F^{-1}g_i }{\partial x} (x)
$ holds for $i = 1,\dots, m$. Note that this is Condition~\eqref{cond:mes3} with $F_d(x) = F(x)$. Indeed, Ref.~\cite{borja2016constructive} selects $F_d(x) = F(x)$, and then the resulting matching equation trivially admits a particular solution $H_d(x) = H(x)$, which implies that Condition~\eqref{cond:mes4} implicitly holds. 
\end{Remark}

\begin{Theorem} \label{thm:underone}
If the system~\eqref{eq:sys} with $x \in \R^n$ and $u \in \R^m$ satisfies $m = n-1$, then it has maximum energy shapeability.
\end{Theorem}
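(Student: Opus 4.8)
The plan is to exploit the fact that $m = n-1$ makes the underactuation degree $n-m$ equal to one, so that $G^\perp(x)$ reduces to a single nonzero row vector and the matching equation~\eqref{eq:mat} becomes a single scalar PDE. In this regime the two distributions appearing in Prop.~\ref{prop:solva} are each generated by just one vector field, and a one-dimensional distribution is automatically involutive; this collapses all the dimension counts and lets both clauses of Definition~\ref{def:mes} be verified directly.

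Concretely, I would first fix a convenient pair $(G^\perp, F_d)$. Any full-rank left annihilator $G^\perp(x) \in \R^{1 \times n}$ works, and for $F_d$ I would take the simplest admissible choice $F_d(x) = -I_n$, which is nonsingular and satisfies $F_d(x) + F_d^T(x) = -2 I_n \preceq 0$. With this choice $W(x) := G^\perp(x) F_d(x) = -G^\perp(x)$ is a single nowhere-vanishing row vector on $\Omega$, so its transpose $w_1(x)$ is a nonzero vector field. Consequently the distribution $\varDelta(x) = \spanop\{w_1\}$ of~\eqref{eq:dstr} is regular of rank one, and likewise $\tilde \varDelta(x) = \spanop\{[\, w_1^T \ s_1\,]^T\}$ of~\eqref{eq:dstr1} is regular of rank one, its single generator being nonzero because $w_1 \neq 0$.

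Next I would observe that any distribution spanned by a single vector field is involutive, since the only Lie bracket to check is $[w_1, w_1] = 0$. Hence $\inv \varDelta = \varDelta$ and $\inv \tilde\varDelta = \tilde\varDelta$, giving $\dim(\inv \varDelta) = \dim(\inv \tilde\varDelta) = 1 = n-m =: d$. Applying Prop.~\ref{prop:solva} with this common value $d = n-m$ shows that the matching equation~\eqref{eq:mat} is solvable, which is exactly condition~a) of Definition~\ref{def:mes}.

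Finally, for condition~b), note that $m = n-1 \geq 1$ forces $n \geq 2$ and therefore $d = 1 < n$; thus Prop.~\ref{prop:char} applies and the homogeneous PDE~\eqref{eq:homopde1} admits $n - d = n - 1 = m$ independent solutions $\xi_1, \dots, \xi_m$. Equivalently, the involutivity of $\varDelta$ already guarantees the maximal number $m$ of characteristic coordinates. Both clauses of Definition~\ref{def:mes} then hold, establishing maximum energy shapeability. I do not anticipate a serious obstacle: the only point requiring care is the regularity, i.e.\ the nonvanishing of $W(x)$, which the choice $F_d = -I_n$ together with the standing full-rank assumption on $G^\perp(x)$ settles immediately; the remainder is bookkeeping with the dimension conditions of Props.~\ref{prop:solva}--\ref{prop:char}.
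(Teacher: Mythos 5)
Your proposal is correct and takes essentially the same route as the paper: both arguments exploit the fact that for $m = n-1$ the distributions $\varDelta(x)$ and $\tilde\varDelta(x)$ are each spanned by a single nonvanishing (augmented) vector field, hence regular and involutive with $\dim(\inv \varDelta) = \dim(\inv \tilde\varDelta) = 1 = n-m$, after which Props.~\ref{prop:solva} and~\ref{prop:char} deliver conditions a) and b) of Definition~\ref{def:mes}. Your only addition is the explicit admissible choice $F_d = -I_n$ (with $F_d + F_d^T = -2I_n \preceq 0$), which concretely instantiates the paper's remark that one can select $G^\perp(x)$ and $F_d(x)$ so that $G^\perp(x) F_d(x)$ is nonzero.
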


\begin{proof}
	For the case $m = n-1$, the left annihilator of $G(x)$, i.e., $G^\perp(x)$ reduces to a row vector. We can select the matrices $G^\perp(x)$ and $F_d(x)$ such that $G^\perp(x) F_d(x)$ is nonzero. In this case, the distributions $\varDelta(x)$ and $\tilde \varDelta(x)$ defined in Prop.~\ref{prop:solva} are both regular and involutive since they contain only one nonzero vector field, that is, $\dim(\inv \varDelta) = \dim ( \inv \tilde \varDelta) = 1$. The maximum energy shapeability follows from Prop.~\ref{prop:solva} and Prop.~\ref{prop:char}.  
\end{proof}

\begin{Remark}
	Ref.~\cite{acosta2005interconnection} considers the IDA-PBC design for a class of mechanical systems described as the pH form. The state variables of these systems are the so-called generalized position and momenta, denoted $p \in \R^n$ and $q \in \R^n$, respectively, and the control input $u$ is of dimension $m$ with $m = n-1$, that is, the system is of underactuation degree one. The matching equation for mechanical systems can be separated into two elements, i.e., the kinetic energy PDEs which depend on both $p$ and $q$, and the potential energy PDEs which only depend on $q$. In~\cite{acosta2005interconnection}, the kinetic energy PDEs can be transformed into algebraic ones by fixing the desired inertia matrix, then the remaining potential energy PDEs with $m =n-1$ have the same structure as the matching equation in the case of Theorem~\ref{thm:underone}. This explains why an explicit solution to the potential energy PDEs in~\cite{acosta2005interconnection} is possible since the maximum energy shapeability is ensured.  
\end{Remark}

\begin{Theorem} \label{thm:cnst}
The system~\eqref{eq:sys} achieves maximum energy shapeability if there exists matrices $G^\perp(x)$ and $F_d(x)$ such that 
	\begin{enumerate}[a)]
		\item \label{cond:mes1} the matrix $G^\perp(x) F_d(x)$ is constant and has full rank;
		\item \label{cond:mes2} the mappings $w_i(x)$ and $s_i(x)$ defined in Prop.~\ref{prop:solva} satisfy
		\begin{equation*} 
		L_{w_i} s_j(x) - L_{w_j} s_i(x) = 0, \text{ for all }i, j =1, \dots, n-m. 
		\end{equation*}
	\end{enumerate}  
\end{Theorem}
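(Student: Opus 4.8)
The plan is to verify the two defining conditions of maximum energy shapeability in Definition~\ref{def:mes} by invoking Prop.~\ref{prop:solva} and Prop.~\ref{prop:char}, both of which translate into statements about the involutivity and dimensions of the distributions $\varDelta(x)$ and $\tilde\varDelta(x)$ built from the rows $w_i(x)$ of $W(x)=G^\perp(x)F_d(x)$ and the entries $s_i(x)$ of $s(x)=G^\perp(x)f(x)$. Concretely, I would aim to prove that under Conditions~\eqref{cond:mes1} and~\eqref{cond:mes2} both distributions are regular and involutive of dimension $n-m$, i.e. $\dim(\inv\varDelta)=\dim(\inv\tilde\varDelta)=n-m$. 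Prop.~\ref{prop:solva} then yields solvability of the matching equation~\eqref{eq:mat} (part~a of the definition), and Prop.~\ref{prop:char} yields the $m$ characteristic coordinates (part~b).

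First I would dispatch $\varDelta(x)$ and part~b). Condition~\eqref{cond:mes1} says $W(x)$ is constant with full rank, so the $w_i$ are constant, hence independent, vector fields. Since Lie brackets of constant vector fields vanish, $\varDelta$ is regular and involutive, whence $\dim(\inv\varDelta)=n-m$. Prop.~\ref{prop:char} (with $d=n-m$) then guarantees $m$ independent solutions $\xi_i(x)$ of the homogeneous PDE~\eqref{eq:homopde1}, which is precisely part~b) of Definition~\ref{def:mes}.

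The crux is the distribution $\tilde\varDelta(x)$, spanned by the augmented fields $\tilde w_i := \begin{bmatrix} w_i \\ s_i \end{bmatrix}$ regarded as vector fields on $\R^{n+1}$ whose coefficients depend only on $x$. The key computation is the Lie bracket of two such fields: because neither $w_i$ nor $s_i$ depends on the extra coordinate, the bracket decouples blockwise as
\begin{equation*}
[\tilde w_i,\tilde w_j] = \begin{bmatrix} [w_i,w_j] \\ L_{w_i}s_j - L_{w_j}s_i \end{bmatrix}.
\end{equation*}
Condition~\eqref{cond:mes1} forces the top block $[w_i,w_j]=0$ (constant fields), and Condition~\eqref{cond:mes2} is exactly the statement that the bottom block vanishes. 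Hence every generating bracket of $\tilde\varDelta$ is zero, so $\tilde\varDelta$ is involutive; since the $w_i$ are already independent, the $\tilde w_i$ are independent too, giving $\dim(\inv\tilde\varDelta)=n-m$.

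Finally I would assemble the pieces: $\dim(\inv\varDelta)=\dim(\inv\tilde\varDelta)=n-m\in\{n-m,\dots,n\}$, so Prop.~\ref{prop:solva} certifies that~\eqref{eq:mat} admits a solution $H_d(x)$ (part~a), while part~b) is already established; recalling that $F_d$ is taken to also satisfy $F_d+F_d^T\preceq 0$ as demanded by Definition~\ref{def:mes}, the system has maximum energy shapeability. I expect the one delicate point to be the bracket computation and, more conceptually, recognizing that the method-of-characteristics augmentation underlying Prop.~\ref{prop:solva} is what converts solvability of the inhomogeneous PDE into involutivity of $\tilde\varDelta$, with Condition~\eqref{cond:mes2} being precisely the cross-derivative symmetry $L_{w_i}s_j = L_{w_j}s_i$ that closes the augmented distribution.
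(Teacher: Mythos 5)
Your proof is correct, and its overall architecture matches the paper's: handle part~b) of Definition~\ref{def:mes} by noting that constant $w_i$ have vanishing Lie brackets, so $\varDelta$ is involutive and Prop.~\ref{prop:char} supplies the $m$ characteristic coordinates; handle part~a) via Prop.~\ref{prop:solva}. The one place you diverge is the treatment of $\tilde\varDelta$: the paper does not compute anything here, but instead cites \cite{kotyczka2009parametrization} for the fact that, when $G^\perp(x)F_d(x)$ is constant, the solvability conditions of Prop.~\ref{prop:solva} reduce exactly to Condition~b). You replace that citation with a self-contained computation, checking that for the augmented fields $\tilde w_i = \begin{bmatrix} w_i^T & s_i \end{bmatrix}^T$, whose coefficients are independent of the auxiliary coordinate, the bracket decouples as
\begin{equation*}
[\tilde w_i,\tilde w_j] = \begin{bmatrix} [w_i,w_j] \\ L_{w_i}s_j - L_{w_j}s_i \end{bmatrix},
\end{equation*}
so that Condition~a) kills the top block, Condition~b) kills the bottom block, and $\dim(\inv\tilde\varDelta)=n-m=\dim(\inv\varDelta)$ follows, which is an admissible value of $d$ in Prop.~\ref{prop:solva}. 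This computation is right (the Jacobian of $\tilde w_j$ has a zero last column, giving the blockwise formula, and vanishing brackets of the spanning fields suffice for involutivity of the span), and your regularity check — independence of the $\tilde w_i$ inherited from the constant full-rank $w_i$ — is the detail the paper leaves implicit. What your route buys is independence from the external reference and an explicit identification of Condition~b) as the closure condition of the characteristic-augmented distribution; what the paper's route buys is brevity, delegating precisely this bracket argument to \cite{kotyczka2009parametrization}. Your closing remark about $F_d + F_d^T \preceq 0$ being carried along from Definition~\ref{def:mes} is also the right thing to flag, since the theorem statement leaves it implicit.
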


\begin{proof}
Since the Lie bracket of two constant vector fields equals zero, Condition~\eqref{cond:mes1} ensures that the distribution $\varDelta(x)$ defined in Prop.~\eqref{prop:solva} is involutive. By Prop.~\ref{prop:char}, the PDE \eqref{eq:homopde1} admits $m$ independent solutions in this case. As shown in \cite{kotyczka2009parametrization}, the conditions specified in Prop.~\ref{prop:solva} reduces to Condition~\eqref{cond:mes2} if $G^\perp(x) F_d(x)$ is constant, that is, Condition~\eqref{cond:mes2} ensures that the matching equation~\eqref{eq:mat} admits a solution $H_d(x)$. 
\end{proof}

\section{Application in Magnetic Levitation System} \label{sec:simu}

The magnetic levitation system \cite{ortega2001putting}, as shown in Fig.~\ref{fig:mls}, consists of an iron ball and an electromagnet which creates a vertical magnetic field. Here, $m$ is the mass of the ball, $\lambda$ is the flux of the electromagnet, $u$ is the voltage across the coil, and $a$ is the gravitational acceleration. Let $\theta $ denote the difference between the position of the center of the ball and its nominal position. Define the state variables as $y:= \begin{bmatrix} \lambda & \theta & m \dot \theta \end{bmatrix}^T$, then the dynamics of the magnetic levitation system are
\begin{align}
\dot y_1 =& -\frac{\gamma}{k}(c - y_2) y_1 + u \nonumber  \\ 
\dot y_2 =& \frac{y_3}{m}  \label{eq:mls}  \\ 
\dot y_3 =& -ma + \frac{y_1^2}{2k}   \nonumber
\end{align}
where $k$ and $c$ are positive constants. The value of $k$ depends on the number of coil turns. It is assumed that $y_2 < c$ and the ball touches the electromagnet at $y_2 = c$.
We seek to maintain the ball at a target position $y_2^*$, that is, the equilibrium to stabilize is $y^* = \begin{bmatrix} \sqrt{2kma} & y_2^* & 0 \end{bmatrix}$. Define the error state $x := y - y^*$, then the error system can be written as 
\be \label{eq:sysmls}
\dot x = f(x) + g(x)u
\ee
with 
\begin{equation*}
f(x) := \begin{bmatrix} 
	     -\frac{\gamma}{k}(c - x_2 - y_2^*)(x_1 + y_1^*) \\
	     \frac{1}{m} x_3  \\
	     \frac{1}{2k}x_1^2 + \frac{y_1^*}{k} x_1
		\end{bmatrix} , ~
g := \begin{bmatrix} 
	   1 \\
	   0 \\
	   0 
	 \end{bmatrix}.
\end{equation*}
Note that a full rank left annihilator of $g$ can be selected as 
\begin{equation*}
g^\perp = \begin{bmatrix} 
	       0 & 1 & 0 \\
	       0 & 0 & 1
		  \end{bmatrix}. 
\end{equation*}
We pick a constant matrix $F_d \in \R^{3 \times 3}$ with $F_d + F_d^T \preceq 0$, and consider the matching equation $g^\perp F_d \nabla H_d(x) =  g^\perp f(x)$. Specifically, the matrix $F_d$ is parameterized as
\begin{equation}
F_d = \begin{bmatrix}
		\alpha_{11} & \alpha_{12} & \alpha_{13} \\
		v_{11}	    & v_{12}      &  v_{13}  \\
		v_{21}      & v_{22}      &  v_{23}  
	  \end{bmatrix}.
\end{equation}

The subsequent derivation is based on the design procedure presented at the end of Section~\ref{sec:main}.

\emph{Step 1.} Since $g^\perp F_d$ is constant in this case, Theorem~\ref{thm:cnst} can be exploited to check if the system \eqref{eq:sysmls} achieves maximum energy shapeability. Condition~\eqref{cond:mes2} in Theorem~\ref{thm:cnst} reduces to 
\begin{equation*}
\frac{1}{m} v_{23} - \frac{y_1^* + x_1}{k} v_{11} = 0.
\end{equation*}
Therefore, the solvability of the matching equation is ensured by setting $v_{23} = v_{11} = 0$. For simplicity, we set $\alpha_{12} = 0$, $v_{21} = -\alpha_{13}$, and $v_{22} = -v_{13}$. As a result, $F_d + F_d^T = \diag(\alpha_{11}, v_{12}, 0)$ and we set $\alpha_{11} < 0$ and $v_{12} < 0$ to ensure closed-loop stability. Then the matching equation reduces to 
\be \label{eq:matmls}
\begin{split}
v_{12} \frac{\partial H_d}{\partial x_2}(x) + v_{13} \frac{\partial H_d}{\partial x_3}(x)	&= \frac{1}{m}x_3 \\
-\alpha_{13} \frac{\partial H_d}{\partial x_1}(x) - v_{13} \frac{\partial H_d}{\partial x_2}(x)	& = \frac{1}{2k}x_1^2 + \frac{y_1^*}{k} x_1.
\end{split}
\ee
Furthermore, the values of the nonzero parameters in $F_d$ are picked such that $F_d$ is nonsingular, and this ensures that Condition~\eqref{cond:mes1} in Theorem~\ref{thm:cnst} holds. Therefore, the system has maximum energy shapeability in this case.   

\emph{Step 2.} Consider the PDE \eqref{eq:homopde1} in this case, i.e., 
\be \label{eq:homopde2}
g^\perp F_d \nabla \xi(x) = 0.
\ee
Note that Condition~\eqref{cond:mes3} in Theorem~\ref{thm:poin} trivially holds since $F_d^{-1}g$ is constant. Therefore, a solution to~\eqref{eq:homopde2}, i.e., the characteristic coordinate, is $\xi = g^T F_d^{-T} x$. 

\emph{Step 3.} Based on Step 2, we can introduce the change of coordinate $z = \begin{bmatrix} \xi & \eta^T \end{bmatrix}^T$ with 
\begin{equation*}
	\eta = \begin{bmatrix} 
		0 & 1 & 0 \\
		0 & 0 & 1
	\end{bmatrix} F_d^{-T} x.
\end{equation*}
That is, $z = F_d^{-T} x$. By Lemma~\ref{lem:nabeta}, we have $\nabla_\eta \bar H_d(z) = \rho(z)$ with 
\begin{equation} \label{eq:nHeta}
	\rho(z) = \begin{bmatrix} \frac{1}{m}(\alpha_{13} z_1 + v_{13} z_2) \\
	\frac{1}{2k}(\alpha_{11} z_1 - \alpha_{13} z_3)^2   + \frac{y_1^*}{k} (\alpha_{11} z_1 - \alpha_{13} z_3) \end{bmatrix}.
\end{equation}
Note that $\frac{\partial^2 \bar H_d}{\partial \eta^2}(0) = \frac{\partial \rho}{\partial \eta } \succ 0$ with $v_{13}>0$ and $\alpha_{13}  < 0$.

\emph{Step 4.} For the system~\eqref{eq:sysmls}, i.e., a single input system, the mapping $M(z)$ defined in Theorem~\ref{thm:main} is a scalar. Therefore, Eq.~\eqref{eq:Msym} and the PDE~\eqref{eq:Mxi} holds naturally in this case. Eq.~\eqref{eq:Meta} in this case reduces to  
\begin{equation} \label{eq:Metamls}
	\frac{\partial M}{\partial z_3} = \frac{\alpha_{11}^2}{k}.
\end{equation}
A particular solution to~\eqref{eq:Metamls} is 
\begin{equation*}
	M(z) = p_1 + p_1 z_1^2 + \frac{\alpha_{11}^2}{k} z_3
\end{equation*}
where $p_1, p_2 >0$ are the control gains to be selected. Note that~\eqref{eq:Metamls} is very easy to solve compared to~\eqref{eq:matmls}. According to Theorem~\ref{thm:m12}, there exists a large enough $p_1$ such that $\frac{\partial^2 \bar H_d}{\partial z^2}(0) \succ 0$, and equivalently $\frac{\partial^2 H_d}{\partial x^2}(0) \succ 0$. 

\emph{Step 5.} By Corollary~\ref{coro:int}, we have 
\begin{equation} \label{eq:nHz1}
\begin{split}
	\nabla_{z_1} \bar H_d(z)  =& \left ( \int_0^1 
	\begin{bmatrix} 
	M(\lambda z) &  \frac{\partial^T \rho}{\partial z_1} (\lambda z)
	\end{bmatrix} \mathrm{d} \lambda \right ) z \\
	=& p_1 z_1  + \frac{p_2}{3} z_1^3 + \frac{\alpha_{11}^2}{k} z_1 z_3  + \frac{\alpha_{13}}{m}z_2 \\
	&+ \frac{ \alpha_{11} y_1^*}{k}z_3 -\frac{\alpha_{11}\alpha_{13}}{2k}z_3^2.
\end{split}
\end{equation}
Combining~\eqref{eq:nHz1} with~\eqref{eq:nHeta} yields the expression of $\nabla_z \bar H_d(z)$. By~\eqref{eq:uinz}, the control law induced by the IDA-PBC method is obtained as
\begin{align}
	u =& \frac{\gamma}{k}(c - x_2 - y_2^*)(x_1 + y_1^*) + p_1 z_1  + \frac{p_2}{3} z_1^3 \label{eq:conlaw}  \\
	& + \frac{\alpha_{11}^2}{k} z_1 z_3  + \frac{\alpha_{13}}{m}z_2 
	+ \frac{ \alpha_{11} y_1^*}{k}z_3 -\frac{\alpha_{11}\alpha_{13}}{2k}z_3^2. \nonumber
\end{align} 

In the simulation, the following model parameters are used \cite{rodriguez2000passivity}: 
\begin{align*}
& \gamma = \SI{2.52}{\ohm}, k = \SI{6.4042e-5}{\newton \metre / \ampere},  c = 0.005~\unit{\metre} \\
& m = 0.0844~ \unit{\kilogram}, a = \SI{9.81}{\meter / \second\squared}.
\end{align*}
Based on the above analysis, the control parameters are selected as: 
$\alpha_{11} = -2, \alpha_{13}= -2, v_{12} = -2, v_{13} = 2, p_1 = 400$, and $p_2 = 20$.
The target position of the ball is set as $y_2^* = \SI{0.002}{\meter}$.
The simulation results under the control law~\eqref{eq:conlaw} are shown in Fig.~\ref{fig:mlbsim}.

\begin{figure}[t]
\begin{center}
 \includegraphics[scale=0.15]{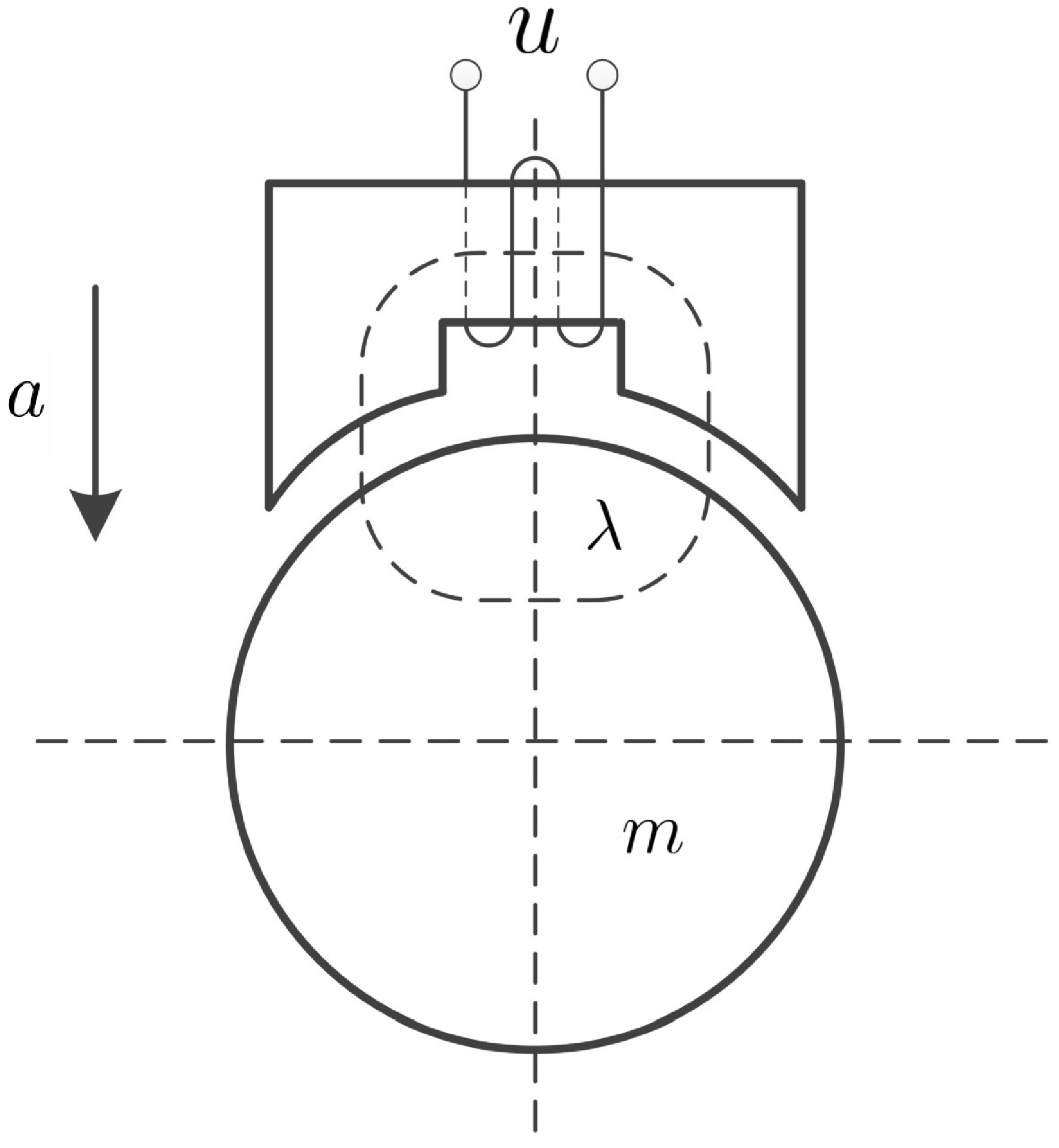}
\caption{Magnetically levitation system}\label{fig:mls}
\end{center}
\end{figure}

\begin{figure}
	\begin{center}
		\includegraphics[scale=0.39]{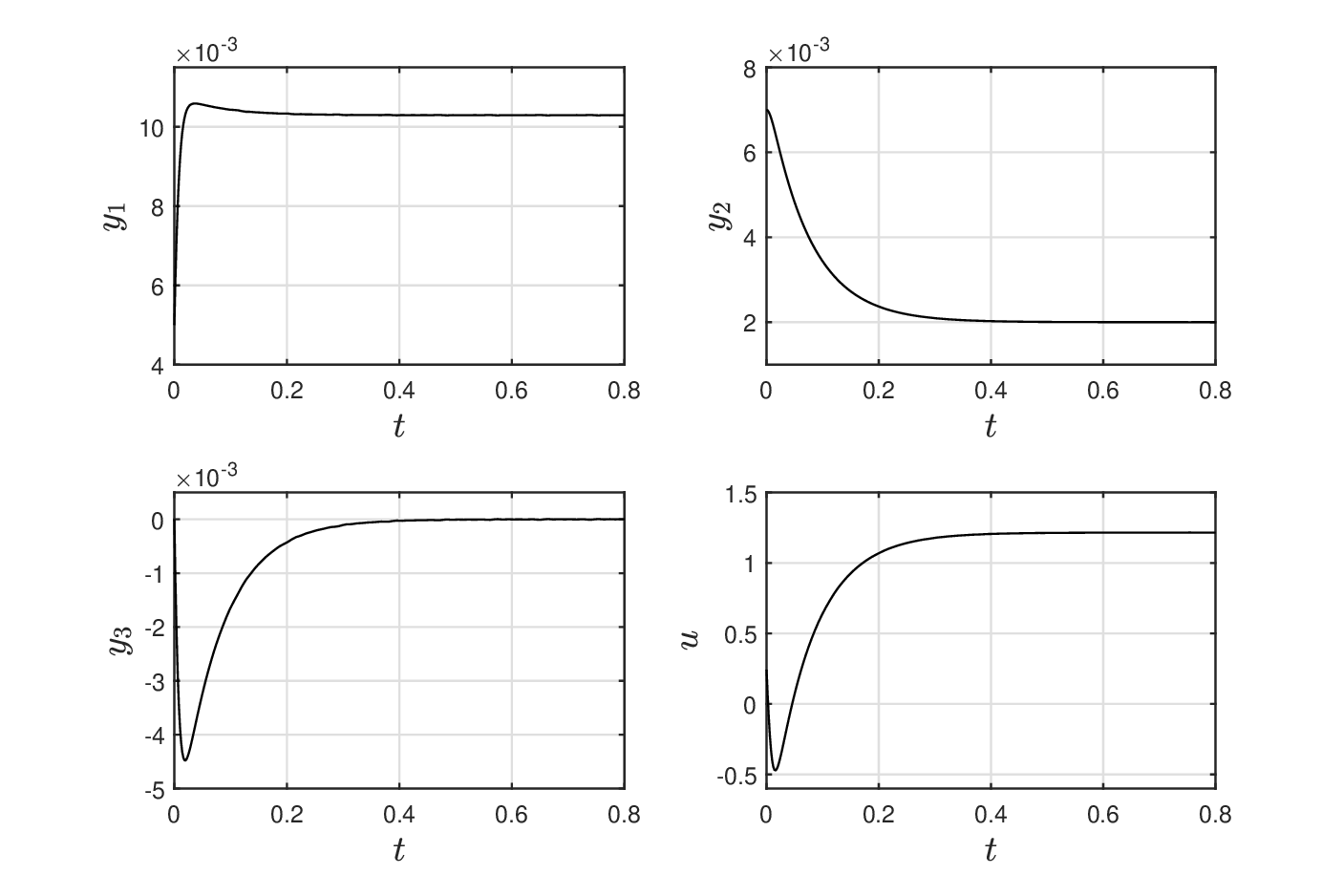}
		\caption{ State-variables $y_1(t), y_2(t), y_3(t)$, and the control input $u(t)$ of the magnetic levitation system~\eqref{eq:mls}.}	\label{fig:mlbsim}
	\end{center}
\end{figure}

\section{Conclusions}\label{sec:conclu}
This paper revisits the Interconnection and Damping Assignment Passivity-Based Control (IDA-PBC) method assuming the newly introduced notion of \emph{maximum energy shapeability}. The maximum energy shapeability ensures that the matching equation arsing in IDA-PBC can be transformed into a reduced order equation. This reduced order matching equation can be further simplified by exploiting the higher-order integrability condition for the Hessian of the desired energy function. 
This enables a systematic procedure for the IDA-PBC design by further applying the the Mean Value Theorem for vector-valued functions. The proposed procedure is illustrated with the magnetic levitation system for which the simplified matching equation can be solved easily compared to the original matching equation. 
Sufficient conditions for maximum energy shapeability are also described, implying that this property is implicitly assumed in some existing IDA-PBC designs, which either provide a closed-form solution for the matching equation or do not rely on explicitly solving it.

\bibliographystyle{IEEEtranS}
\bibliography{maxida}
\end{document}